\documentclass[11pt]{article}

\usepackage{graphicx}%
\usepackage[utf8]{inputenc} 
\usepackage[T1]{fontenc}
\usepackage{multirow}%
\usepackage{amsmath,amssymb,amsfonts}%
\usepackage{mathtools}
\usepackage{amsthm}%
\usepackage{mathrsfs}%
\usepackage[normalem]{ulem}
\usepackage[title]{appendix}%
\usepackage{xcolor}%
\usepackage{indentfirst}
\usepackage{textcomp}%
\usepackage{manyfoot}%
\usepackage{booktabs}%
\usepackage{algorithm}%
\usepackage{algorithmicx}%
\usepackage{algpseudocode}%
\usepackage{listings}%
\usepackage{tikz}
\usepackage[colorlinks=true, allcolors=blue]{hyperref}
\usepackage{comment}
\newtheorem{theorem}{Theorem}

\newtheorem{definition}{Definition}
\newtheorem{proposition}{Proposition}

\newtheorem{remark}{Remark}%
\newtheorem{lemma}{Lemma}
\newtheorem{claim}{Claim}
\newcommand{\Z}{\mathbb{Z}}
\newcommand{\E}{\mathcal{E}}
\newcommand{\N}{\mathbb{N}}

\renewcommand{\P}{\mathbb{P}}

\setlength{\oddsidemargin}{-0.125in} \setlength{\topmargin}{-0.5in}
\setlength{\textwidth}{6.5in} \setlength{\textheight}{9in}
\setlength{\textheight}{9in} \setlength{\textwidth}{6.8in}
\setlength{\topmargin}{-40pt} \setlength{\oddsidemargin}{0pt}
\setlength{\evensidemargin}{0pt}
\setlength{\baselineskip}{1.0cm}
\DeclareGraphicsExtensions{.jpg,.pdf,.eps,.png}
%



\title{Constrained-degree percolation on the hypercubic lattice: uniqueness and some of its consequences}
\begin{document}
\sloppy
\author{
    Weberson S. Arcanjo\thanks{Departamento de Matemática Aplicada, Universidade Federal Fluminense, Niterói, Rio de Janeiro, Brazil.}\,\,\,\,\,\,\,\,\,\,\,\,\,
    Alan S. Pereira\thanks{Instituto de Matemática, Universidade Federal de Alagoas, Maceió, Alagoas, Brazil.}\,\,\,\,\,\,\,\,\,\,\,\,\,
    Diogo C. dos Santos$^\dagger$\,\,\,\,\,\,\,\,\,\,\,\,\,\\ \\
    Roger W. C. Silva\thanks{Departamento de Estatística, Universidade Federal de Minas Gerais, Belo Horizonte, Minas Gerais, Brazil.\newline}\,\,\,\,\,\,\,\,\,\,\,\,\,
    Marco Ticse$^\ddagger$\newline
}
\date{}
\maketitle
\vskip 15mm

\begin{abstract} We consider constrained-degree percolation on the hypercubic lattice. This is a continuous-time model defined by a sequence $(U_e)_{e\in\mathcal{E}^d}$ of i.i.d. uniform random variables and a positive integer $k$, referred to as the constraint. The model evolves as follows: each edge $e$ attempts to open at a random time $U_e$, independently of all other edges. It succeeds if, at time $U_e$, both of its end-vertices have degrees strictly smaller than $k$. It is known \cite{hartarsky2022weakly} that this model undergoes a phase transition when $d\geq3$ for most nontrivial values of $k$. In this work, we prove that, for any fixed constraint, the number of infinite clusters at any time $t\in[0,1)$ is almost surely either 0 or 1. We also show that the law of the process is differentiable with respect to time for local events, extending a result of \cite{SSS}. As a consequence of these two results, we prove that the percolation function is continuous in the supercritical regime $t\in(t_c,1)$, where $t_c$ denotes the percolation critical threshold. Finally, we show that the two-point connectivity function is bounded away from zero in the supercritical regime. 
\medskip

\noindent{\it Keywords: constrained percolation; dependent percolation; uniqueness, connectivity function} 

\noindent {\it AMS 1991 subject classification: 60K35; 82C43; 82B43} 
\end{abstract}

\section{Introduction}\label{sec1}

The study of random systems with local constraints dates back to the 1930s. One early example is the random sequential adsorption model introduced by Flory \cite{flory1939intramolecular}, motivated by problems in physical chemistry. A few decades later, the classical Bernoulli percolation model was proposed by Broadbent and Hammersley \cite{broadbent1957percolation} to describe the flow of a deterministic fluid through a random medium. Since then, percolation theory has given rise to a rich variety of models, with applications in fields as diverse as epidemiology \cite{ziff2021percolation}, composite materials \cite{eletskii2015electrical}, and forest fire dynamics \cite{perestrelo2022multi}.

The intersection of percolation theory and local constraints has attracted attention since at least the work of Gaunt, Guttmann, and Whittington \cite{DSGaunt_1979}, who introduced a percolation model with restricted valence—closely related to the model studied in this paper. In recent years, constrained random structures have gained renewed interest. For instance, Grimmett and Janson \cite{grimmett2010random} studied the Erdős–Rényi random graph $G_{n,p}$ conditioned on vertex degrees lying in a fixed subset $S$ of nonnegative integers. The 1-2 model, in which every vertex on the hexagonal lattice has degree 1 or 2, has been the subject of rigorous mathematical analysis \cite{grimmett20171}. Kinetically constrained models—particle systems introduced to study glassy and granular materials—have also received considerable attention \cite{10.1214/19-AAP1527}. Other examples of models with degree or interaction constraints appear in \cite{Dereudre,garet2018does,holroyd2021constrained,li2020constrained}.

\subsection{The model}

Let $\mathbb{L}^d=(\mathbb{Z}^d,\mathcal{E}^d)$ denote the hypercubic lattice with nearest neighbors. The \textbf{constrained-degree percolation} (CDP) we study in this work was introduced in \cite{Te}. It is a continuous-time percolation process with infinite-range dependencies, whose dynamics evolve as follows:  let $(U_e)_{e\in\mathcal{E}^d}$ be a sequence of $i.i.d.$ random variables with uniform distribution on the interval $[0, 1]$ and $k$ be a positive integer, $k\leq 2d$. At time $t = 0$, all edges are closed; an edge $e = \langle u, v\rangle$ opens at time $U_e$ if, for all $s<U_e$, it holds that  
\begin{equation*}
  \max\{|\{x\in\mathbb{Z}^d:\ \langle x, u\rangle \mbox{ is open at time }  s\}|, |\{x\in\mathbb{Z}^d:\ \langle x, v\rangle \mbox{ is open at time }  s\}| \} < k. \end{equation*}
Once an edge is open, it remains open.  We denote by $\mathbb{P}$ the corresponding probability measure on $[0,1]^{\mathcal{E}^d}$. Sometimes we refer to $(U_e)_{e\in\mathcal{E}^d}$ as a \textbf{collection of clocks}.

Given a collection of clocks $U=(U_e)_{e\in\mathcal{E}^d}$, let $\omega_t^{d,k}(U)\in\{0,1\}^{\mathcal{E}^d}$ denote the temporal percolation configuration of open and closed edges at time $t\in[0, 1]$. Specifically, $\omega_t^{d,k}(U,e)=1$ indicates that $e$ is open at time $t$, while $\omega_t^{d,k}(U,e)=0$ indicates that $e$ is closed at time $t$. In this context, we also refer to $e$ as \textbf{$t$-open} or \textbf{$t$-closed}, respectively. 

For simplicity, when there is no risk of confusion, we will omit the superscripts $d$ and $k$ and use the notation $\omega_t(U)$ and $\omega_t(U,e)$ instead.

Consider the CDP model on $\mathbb{L}^d$ with $k\leq 2d$. Let $\mathbb{G}_t$ be the subgraph of $\mathbb{L}^d$ induced by $\omega_t$. The connected components of $\mathbb{G}_t$ are called \textbf{open clusters}, and we say that two sites $x,y\in\Z^d$ are connected in $\omega_t$, denoted by $x\longleftrightarrow y$, if they are in the same open cluster of $\mathbb{G}_t$. If the cluster of a vertice $x$ is infinite, we say that $x$ is connected to infinity, denoted by $x\longleftrightarrow \infty$.

An important object in percolation theory is the probability that the origin belongs to an infinite cluster. With this in mind, define
\begin{equation*}
    \theta^{d,k}(t)=\mathbb{P}\left(0\longleftrightarrow \infty \mbox{ at $t$}\right).
\end{equation*}
Clearly, $\theta^{d,k}(t)$ is non-decreasing in $t$. Hence, we define the critical threshold $$t^{d,k}_c=\inf\{t\in[0,1]:\ \theta(t)>0 \}.$$


From now on, we omit the superscripts $d$ and $k$ from the notation and write simply $\theta(t)$ and $t_c$ instead of $\theta^{d,k}(t)$ and $t^k_c(\mathbb{L}^d)$. It is straightforward that the CDP is stochastically dominated by standard Bernoulli percolation for every fixed $t\in[0,1]$ and $k\leq 2d$, implying that the critical threshold $t_c$ is always greater than or equal to the critical parameter for Bernoulli percolation.

A nice feature of the CDP process is that translations of the lattice are ergodic with respect to $\mathbb{P}$. Since, for all $t\in[0,1]$, the event $\{\mbox{there exists an infinite cluster in } \omega_t\}$ is translation invariant, $\theta(t)$ is positive if, and only if,  
\begin{equation*}\label{eq: a outra função}
    \mathbb{P}\left(\mbox{there exists an infinite open cluster at $t$}\right)=1.
\end{equation*}

A non-trivial phase transition for the CDP model on $\mathbb{L}^2$ was first established in \cite{Te}, where it was shown that percolation occurs at $t=1$ for $k=3$. In \cite{LSSST}, the authors refined this result, proving that $1/2<t_c<1$ when $k=3$. Moreover, when $d>1$ and $k=2$, they show that $\theta(t)=0$, for all $t\in[0,1]$.

In \cite{hartarsky2022weakly}, the authors provide useful upper bounds for $t_c$ for several values of $d$ and $k$. More precisely, they prove that $t_c<1.7/d$ when $k>9$ and $d>k/2$; and in the following cases: $(d,k)=(4,7)$; $d\in\{5,6\}$ and $k\geq 8$; $d\in[7,16]$ and $k\geq9$. In the case $(d,k)=(3,5)$, they show that $t_c \leq 0.62$.

\subsection{Results }

The problem of uniqueness of the infinite open cluster in percolation models goes back to 1960. In that year, Harris \cite{harris1960lower} showed that Bernoulli percolation on the square lattice admits a unique infinite open cluster in the supercritical regime. Later, Newman and Schulman \cite{newman1981infinite} showed that, under the \textbf{finite energy condition}, the number of infinite open clusters in a percolation process on the $d$-dimensional hypercubic lattice must be 0, 1, or $\infty$, almost surely. The possibility of infinitely many infinite open clusters was  excluded in a work by Aizenman, Kesten, and Newman \cite{aizenman1987uniqueness}. Burton and Keane \cite{burton1989density} gave a beautiful argument for an alternative proof of this last result. 

The problem of uniqueness continues to attract the attention of many researchers. In \cite{10.1214/ECP.v19-3105}, the author shows that,  for any translation-invariant Gibbs measure of 1-2 model configurations
on the hexagonal lattice, the infinite homogeneous cluster is unique, almost surely. In \cite{curien2019uniqueness}, the authors consider the graph obtained by superposition of an independent pair of uniform infinite non-crossing perfect matchings of the set of integers. They prove that this graph contains at most one infinite path almost surely.  Other recent works on the topic include \cite{chebunin2024uniqueness, czajkowski2024non,severo2023uniqueness}. 

\begin{remark}\label{diff}
The CDP model presents several features that make its analysis particularly challenging. First, it does not satisfy the finite energy condition, which rules out many classical techniques based on local modifications. Second, the model lacks the FKG property, preventing the use of correlation inequalities that are central in standard percolation theory. Finally, it exhibits dependencies of arbitrary order, meaning that local events can be influenced by configurations far away. These aspects make the study of the CDP model both mathematically rich and technically demanding.
\end{remark}

Below, we present our results for the CDP model. Note that when $k = 2d$, the model reduces to the classical Bernoulli percolation process, for which the corresponding theorems are well known.

The main result of this work establishes the uniqueness of the infinite cluster for the CDP model on $\mathbb{L}^d$ with constraint $k \leq 2d$, for all $t \in [0,1)$. The first result in this direction was obtained in \cite{Te}, where uniqueness of the infinite cluster was proved for the CDP on $\mathbb{L}^d$, with $d \geq 2$, $k = 2d - 1$, and $t = 1$. Further progress was achieved in \cite{LSSST}, where the authors showed uniqueness of the infinite cluster for all $t$ such that $\theta(t) > 0$, in the case $d = 2$ and $k = 3$. We now state our main theorem.

\begin{theorem}\label{thm: trifurcação}
Consider the CDP model on $\mathbb{L}^d$ with constraint $k\leq 2d$. Let $\mathcal{N}_t$ denote the number of infinite clusters at time $t$. Then, for all $t\in[0,1)$, it holds that
\begin{equation*}
    \mathcal{N}_t \in\{0,1\}, \text{ almost surely.}
\end{equation*}
\end{theorem}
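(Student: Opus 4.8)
The plan is to run a Burton--Keane type argument \cite{burton1989density}, adapted to the clock representation of the CDP. First I would use that $\P$ is invariant and ergodic under the lattice translations: since $\{\mathcal{N}_t = m\}$ is translation invariant for each $m\in\{0,1,2,\dots\}\cup\{\infty\}$, ergodicity forces $\mathcal{N}_t$ to equal a deterministic constant $n_t$ almost surely. It then suffices to exclude the cases $2\le n_t<\infty$ and $n_t=\infty$.

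The technical heart is a substitute for the finite energy property, which the model lacks (Remark \ref{diff}). Here the restriction $t<1$ is essential. Given a finite box $B$ and conditioning on the clocks $(U_e)$ of all edges with both endpoints outside $B$, I would resample the clocks of the edges meeting $B$ and prove a uniform lower bound on the conditional probability of producing a prescribed, admissible configuration inside $B$ at time $t$, while leaving the configuration unchanged on every edge outside $B$. Two positive-probability moves drive this: an edge can be forced $t$-closed by placing its clock in $(t,1]$, an event of probability $1-t>0$ precisely because $t<1$ (this is exactly what fails at $t=1$); and a chain of edges can be forced open by assigning them increasing clock values near $0$ in an order that respects the degree constraint $k$, so that each opens before its endpoints saturate. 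The delicate point, and the main obstacle, is controlling the cascade: because the dynamics have infinite range, altering degrees inside $B$ can in principle change whether boundary and exterior edges open. I would neutralize this by choosing the resampled clocks so that every boundary edge opens at the same time as in the original realization (keeping the relevant interior degrees low before that time by delaying the appropriate internal clocks), which freezes the exterior evolution and, in particular, preserves all exterior infinite clusters.

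With this tool in hand, the two cases follow the classical template. If $2\le n_t<\infty$, choose $B$ large enough that with positive probability it meets all $n_t$ infinite clusters; the modification lemma lets me join them inside $B$ into a single cluster on a positive-probability event, producing strictly fewer than $n_t$ infinite clusters and contradicting $\mathcal{N}_t=n_t$ almost surely. If $n_t=\infty$, I call $x$ a trifurcation point if $x$ lies in an infinite cluster and deleting $x$ splits that cluster into exactly three infinite pieces; when infinitely many infinite clusters exist, three of them reach a box around $x$ with positive probability, and the modification lemma creates such a trifurcation (a degree-three choke point joined to the three arms) with probability bounded below. By translation invariance the density $\rho$ of trifurcation points is then positive. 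The Burton--Keane combinatorics now apply to the resulting configuration: the trifurcation points inside a box $\Lambda_n$ inject into the boundary $\partial\Lambda_n$, since they are the internal degree-three vertices of a forest whose leaves lie on $\partial\Lambda_n$, giving $\rho\,|\Lambda_n|\le C\,|\partial\Lambda_n|$ and hence $\rho=0$, a contradiction. Therefore $n_t\in\{0,1\}$.

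I expect the finite energy substitute of the second paragraph to be where essentially all the work lies; once it is established in the clock representation, the ergodicity reduction and the trifurcation counting are routine, and the $d=2$, $k=3$ analysis of \cite{LSSST} can serve as a blueprint for the box constructions.
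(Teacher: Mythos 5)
Your proposal follows essentially the same route as the paper: an ergodicity reduction, a finite-energy substitute obtained by resampling clocks so that $t$-closed boundary edges receive clocks in $(t,1]$ (this is exactly where $t<1$ enters) while $t$-open boundary edges keep their original opening times with interior degrees held below $k$, followed by a Newman--Schulman merging argument for $2\le \mathcal{N}_t<\infty$ and a Burton--Keane trifurcation count for $\mathcal{N}_t=\infty$. The ``cascade control'' you single out as the main obstacle is precisely the paper's Lemma \ref{teo:PMF} (a modification that \emph{simplifies the boundary} of $\Lambda_n$ leaves every exterior edge state unchanged), which the paper establishes by induction along the almost surely finite decreasing clusters of Section \ref{sec: decreasing cluster}.
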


Our second result concerns the connectivity function of two points. Given $x,y\in \mathbb{Z}^d$, define
\begin{equation*}
\tau_t(x,y)=\P(x\longleftrightarrow y \mbox{ at time $t$}).
\end{equation*}
Theorem \ref{thm: trifurcação} allows us to prove the following result.

\begin{theorem}\label{conn1} Consider the CDP model on $\mathbb{L}^d$ with constraint $k\leq 2d$. If $\theta(t)>0$, then $$\displaystyle\inf_{x,y\in\mathbb{Z}^d}\tau_{t}(x,y)>0.$$
\end{theorem}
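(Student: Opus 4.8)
The plan is to exploit the uniqueness statement of Theorem~\ref{thm: trifurcação} together with translation invariance, reducing everything to a single connection probability. By translation invariance of $\mathbb{P}$ we have $\tau_t(x,y)=\tau_t(0,y-x)$, so it suffices to bound $\tau_t(0,z)$ from below by a positive constant uniform in $z\in\mathbb{Z}^d$. Let $C_\infty$ denote the infinite open cluster at time $t$. Since $\theta(t)>0$, an infinite cluster exists almost surely, and by Theorem~\ref{thm: trifurcação} it is unique. The conceptually central observation is then that whenever two finite sets of vertices each intersect $C_\infty$, they lie in the same cluster and are therefore connected to one another; in particular $\{0\in C_\infty\}\cap\{z\in C_\infty\}\subseteq\{0\longleftrightarrow z\}$, which already gives $\tau_t(0,z)\ge\mathbb{P}(0\in C_\infty,\,z\in C_\infty)$.

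The difficulty is that, in the absence of the FKG inequality (see Remark~\ref{diff}), the joint event above cannot be bounded below by $\theta(t)^2$ in the naive way. To circumvent this I would first prove a scale-uniform connection estimate. Fix $\delta>0$. Writing $\Lambda_n(z)$ for the box of radius $n$ centered at $z$, the events $\{\Lambda_n(z)\cap C_\infty\neq\emptyset\}$ increase, as $n\to\infty$, to $\{C_\infty\neq\emptyset\}$, which has probability one; hence there is an $n=n(\delta)$, independent of $z$ by translation invariance, with $\mathbb{P}(\Lambda_n(z)\cap C_\infty\neq\emptyset)\ge 1-\delta$. On $\{0\in C_\infty\}\cap\{\Lambda_n(z)\cap C_\infty\neq\emptyset\}$ uniqueness forces $0$ to be joined to $\Lambda_n(z)$, so, uniformly in $z$,
\begin{equation*}
\mathbb{P}\bigl(0\longleftrightarrow\Lambda_n(z)\bigr)\ \ge\ \mathbb{P}(0\in C_\infty)-\mathbb{P}\bigl(\Lambda_n(z)\cap C_\infty=\emptyset\bigr)\ \ge\ \theta(t)-\delta.
\end{equation*}
For $|z|$ large enough that $0\notin\Lambda_n(z)$, any such connection must cross $\partial\Lambda_n(z)$, so the same lower bound holds for the event that $0$ is joined to some vertex $w$ of $\partial\Lambda_n(z)$ by a path meeting $\Lambda_n(z)$ only at $w$.

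It then remains to transfer such a boundary connection to the center $z$. Here I would invoke a finite-energy substitute available at the level of the clock representation: although configuration-level finite energy fails, the clocks $(U_e)_{e\in\mathcal{E}^d}$ are i.i.d. with a continuous law, so prescribing finitely many of them to lie in suitable subintervals is an event of positive probability, independent of all remaining clocks. Concretely, given the connection to $w\in\partial\Lambda_n(z)$, one opens a fixed self-avoiding path from $w$ to $z$ inside $\Lambda_n(z)$ by forcing the corresponding clocks to be small and the competing edges (those that could raise the relevant degrees to $k$) to have late clocks; this touches only a bounded number of clocks, depending on $n,d,k$ but not on $z$, strictly inside $\Lambda_n(z)$, and so leaves the external connection $0\longleftrightarrow w$ intact. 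The cost of this surgery is bounded below by some $c_1=c_1(n,d,k,t)>0$, uniformly in $z$, whence $\tau_t(0,z)\ge c_1\bigl(\theta(t)-\delta\bigr)>0$ for all large $|z|$. For the finitely many remaining small $z$ the same clock surgery gives $\tau_t(0,z)>0$, and a finite minimum of positive numbers is positive; taking the infimum over $z$ yields the claim.

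The main obstacle is exactly this localization step, forced on us by the simultaneous failure of FKG and finite energy. One must make the clock surgery rigorous: checking that forcing the chosen interior edges open while keeping the competing ones closed is compatible with the constraint $k$, that the modification genuinely preserves the long-range external connection despite the infinite-range dependence of the dynamics, and that the induced change of measure has a density bounded below uniformly in $z$. Controlling this surgery in the presence of arbitrarily long dependencies, rather than relying on the FKG or finite-energy shortcuts of classical percolation, is where the technical weight of the argument lies.
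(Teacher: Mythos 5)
Your first reduction is sound: translation invariance, the inclusion $\{0\in C_\infty\}\cap\{z\in C_\infty\}\subseteq\{0\longleftrightarrow z\}$ from Theorem~\ref{thm: trifurcação}, and the uniform bound $\mathbb{P}(0\longleftrightarrow\Lambda_n(z))\geq\theta(t)-\delta$ are all correct. The genuine gap is the ``clock surgery'' step, which you yourself flag as the main obstacle but never carry out; it is not a routine verification, because it is exactly where the failure of finite energy and the infinite-range dependencies bite. To convert $\{0\longleftrightarrow\Lambda_n(z)\}$ into $\{0\longleftrightarrow z\}$ at a uniform multiplicative cost, you must modify clocks inside $\Lambda_n(z)$ while guaranteeing that no edge state outside the box changes. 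For generic modifications this is false in the CDP model: opening interior edges incident to the entry vertex $w\in\partial\Lambda_n(z)$ can raise $\deg_s(w,\cdot)$ to $k$ before the clock of the boundary edge that carries your external connection $0\longleftrightarrow w$, thereby closing it; conversely, closing interior edges can cause exterior edges to open that were closed before, and such changes cascade arbitrarily far. Lemma~\ref{teo:PMF} of the paper shows that exterior states are preserved only for modifications that ``simplify the boundary'', and exploiting it requires decomposing over the states $A\subset\partial^e\Lambda_n(z)$ of the boundary edges, fixing the entry vertex and explicit open paths, and degree bookkeeping that needs $k\geq 3$ (with $k\leq 2$ handled separately since then $\theta\equiv 0$); this is precisely the content of Claims~\ref{claim:DGamma}--\ref{lem: final} in the proof of Theorem~\ref{thm: trifurcação}. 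Moreover, your prescription ``force the competing edges to have late clocks'' is inconsistent as stated: some of the competing edges at $w$ are exactly the open boundary edges carrying the connection you must not disturb. So what you have is a program whose core step would amount to redoing, in quantitative form, the entire modification machinery of Section~\ref{sec: proof of the main theorem}; as written, the proof is incomplete.

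It is worth knowing that the paper avoids any surgery. Its proof approximates $\{x\longleftrightarrow\infty\}$ by the local events $\mathcal{B}_{t,n_0}(x)=\{x\longleftrightarrow\partial\Lambda_{n_0}\mbox{ at }t\}$ with error $\theta^2/4$, uses uniqueness to get $\tau_t(x,y)\geq\mathbb{P}(\mathcal{B}_{t,n_0}(x)\cap\mathcal{B}_{t,n_0}(y))-\tfrac{1}{2}\theta^2$, and then decouples the two local events via the exponential mixing estimate of Theorem~2 in \cite{SSS}, which costs only $c_1 n_0 e^{-\psi d(x,y)}$ and serves as the substitute for the missing FKG inequality. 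This yields $\tau_t(x,y)\geq\tfrac{1}{2}\theta^2-c_1n_0e^{-\psi d(x,y)}>0$ for $d(x,y)$ large, and pairs at bounded distance are handled by a direct positive-probability path-opening argument. If you want to salvage your approach, the efficient fix is to replace the unproven surgery by this mixing input rather than to build the modification machinery from scratch.
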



Clearly, $\theta(t)$ is continuous for $t<t_c$, since $\theta(t)=0$ in this case. It turns out that the continuity of $\theta(t)$ for $t>t_c$ is a consequence of the uniqueness of the open cluster and of the continuity of $t \mapsto \mathbb{P}(\omega_t^{-1}(\cdot))$ for local events. In Section \ref{analy}, we show that, given any local event $A$, the function $t \mapsto \mathbb{P}(\omega_t^{-1}(A))$ is differentiable in the time interval $[0,1]$, improving upon Theorem 3 of \cite{SSS}, which establishes only continuity. This will enable us to prove the following.

\begin{theorem}\label{cont} Consider the CDP model on $\mathbb{L}^d$ with constraint $k\leq 2d$. The percolation function $\theta(t)$ is a continuous function of $t$ on the interval $(t_c,1)$.
\end{theorem}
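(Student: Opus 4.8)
The plan is to exploit the monotone coupling inherent to the model: since every edge, once open, stays open, the single family of clocks $(U_e)_{e\in\E^d}$ realizes all configurations simultaneously and $\omega_s\subseteq\omega_t$ (as sets of open edges) whenever $s\le t$. In particular the origin's cluster $C_t$ and, when it exists, the infinite cluster are nondecreasing in $t$, and $\theta$ is nondecreasing. It therefore suffices to prove right- and left-continuity of $\theta$ at each $t_0\in(t_c,1)$ separately; I expect right-continuity to follow from the continuity of local events and left-continuity to be the genuine difficulty, requiring Theorem~\ref{thm: trifurcação}.

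For right-continuity, let $B_n=[-n,n]^d\cap\Z^d$ and let $A_n\subseteq\{0,1\}^{\E^d}$ be the event that $0$ is joined to $\partial B_n$ by an open path contained in $B_n$; this is a cylinder event depending only on the states of the finitely many edges inside $B_n$. One checks that $\{0\longleftrightarrow\infty\}=\bigcap_n \omega_t^{-1}(A_n)$ with the events $\omega_t^{-1}(A_n)$ decreasing in $n$, so that $\theta(t)=\inf_n\theta_n(t)$, where $\theta_n(t)=\P(\omega_t^{-1}(A_n))$. By the differentiability result of Section~\ref{analy}, each $\theta_n$ is continuous on $[0,1]$; hence $\theta$, being an infimum of continuous functions, is upper semicontinuous. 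A nondecreasing upper semicontinuous function is right-continuous, which settles this half (and uses no uniqueness).

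The main obstacle is left-continuity, because an increasing union of finite clusters can be infinite: a priori the origin could lie in a finite cluster for every $s<t_0$ yet in an infinite cluster at $t_0$. Here is where I would invoke uniqueness. Fix $t_0\in(t_c,1)$ and a sequence $s_k\uparrow t_0$ with $s_k\in(t_c,t_0)$. Since $\theta(s_k)>0$ and $\theta(t_0)>0$, Theorem~\ref{thm: trifurcação} gives, almost surely, unique infinite clusters $I_{s_k}$ and $I_{t_0}$; by the coupling these are nested, $I_{s_1}\subseteq I_{s_2}\subseteq\cdots\subseteq I_{t_0}$. The key claim is that almost surely $I_{t_0}=\bigcup_k I_{s_k}$. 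Indeed, $\bigcup_k I_{s_k}$ is an infinite connected subset of $\omega_{t_0}$, hence contained in $I_{t_0}$; conversely, if $x\in I_{t_0}$, connectedness of $I_{t_0}$ provides a finite open path from $x$ to some $y\in\bigcup_k I_{s_k}$, all of whose edges carry clocks $<t_0$ (almost surely no edge rings exactly at $t_0$). Choosing $s_k$ close enough to $t_0$ that this path is already open at time $s_k$ and $y\in I_{s_k}$, we obtain $x\in I_{s_k}$, and uniqueness is precisely what identifies these infinite connected sets as the infinite clusters. Granting the claim, since $\{0\in I_{s_k}\}$ increases to $\{0\in\bigcup_k I_{s_k}\}=\{0\in I_{t_0}\}$ and $\theta(s_k)=\P(0\in I_{s_k})$, monotone convergence yields $\lim_{k}\theta(s_k)=\P(0\in I_{t_0})=\theta(t_0)$, i.e.\ $\theta(t_0^-)=\theta(t_0)$.

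Combining the two halves gives continuity of $\theta$ on $(t_c,1)$. The delicate point throughout is the identity $I_{t_0}=\bigcup_k I_{s_k}$: it is exactly the possibility of several competing large clusters merging only in the limit $s\uparrow t_0$ that uniqueness rules out, turning the naive monotone-limit inequality into an equality.
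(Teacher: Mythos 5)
Your proof is correct and takes essentially the same route as the paper: right-continuity follows from writing $\theta(t)$ as a decreasing limit of the continuous functions $t\mapsto\P(\mathcal{B}_{t,n}(0))$ (continuity coming from Proposition~\ref{diff2}) so that $\theta$ is upper semicontinuous and, being nondecreasing, right-continuous; left-continuity uses the monotone clock coupling and uniqueness from Theorem~\ref{thm: trifurcação}. The only difference is presentational: the paper simply cites the classical van den Berg--Keane argument \cite{BK} for the left-continuity step, whereas you reconstruct that argument in full (nested infinite clusters $I_{s_k}\subseteq I_{t_0}$, the identity $I_{t_0}=\bigcup_k I_{s_k}$ via a finite open path whose clocks are strictly below $t_0$, and continuity of measure), and your reconstruction is sound.
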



\subsection{Notation}

For $n\in\N$ and $v=(v_1,v_2,\ldots,v_d)\in\mathbb{Z}^d$, denote $[n]=\{1,\ldots,n\}$ 
 and $||v||=\sup\{|v_i|: i \in [d]\}$. For $A\subset \Z^d$,  we write the external edge boundary and inner vertex boundary of $A$ as    
$$
\partial^e A=\{\langle u, v \rangle \in \mathcal{E}^d \colon\ u \in A \mbox{ and } v\in A^c\},$$
and
$$\partial A=\{v\in A: \mbox{ there exits } u \in A^c \mbox{ such that } u \sim v\},$$
respectively,  where $u\sim v$ means $||u-v||=1$. We denote by
\begin{equation*}
    \mathcal{E}(A)=\{\langle u,v\rangle\in\mathcal{E}^d: u,v\in A\}
\end{equation*} the set of edges with both endpoints in $A$.
We write $\Lambda_n=\{v\in\Z^d: ||v|| \leq n\}$ for the box with side-length $2n$ and centre at the origin.

The restriction of a percolation configuration $\omega$ to a set $K\subset\mathcal{E}^d$ is denoted by $\omega|_K=\{\omega(e)\colon e\in K\}.$ Define the cylinder set
$[\omega]_{K}=\{\omega'\colon\omega'|_K=\omega|_K\}.$
We say that the event $A$ \textbf{lives on} a set $K$ if $\omega\in A$ and $\omega'\in [\omega]_{K}$ imply $\omega'\in A$.  Given $K\subset\mathcal{E}^d$, we write $\mathcal{H}_{K}=\{A\colon A\mbox{ lives on }K\}$. We say $A$ is a \textbf{local event} if $A$ lives on some finite set $K$.

The remainder of this paper is organized as follows. In Section \ref{sec: decreasing cluster}, we introduce the notion of a decreasing cluster. We also show that, when the configuration on the exterior edge boundary of a given box is fixed, the property of the box being connected to infinity, under suitable conditions, depends only on the clocks outside the box (Lemma \ref{teo:PMF}). In Section \ref{sec: proof of the main theorem}, we present the proof of Theorem \ref{thm: trifurcação} in two steps: first, we show that the number of infinite clusters is almost surely either $0$, $1$, or $\infty$; second, we rule out the case $\mathcal{N}_t = \infty$. In Section \ref{conn}, we prove Theorem \ref{conn1}. In Section \ref{analy}, we show that the function  $t \mapsto \mathbb{P}(\omega_t^{-1}(\cdot))$ is differentiable on $[0,1]$ for any local event, and we prove Theorem \ref{cont}. Finally, in Section \ref{proof_claims}, we prove some auxiliary results used to prove Theorem \ref{thm: trifurcação}.

\section{The decreasing cluster method}\label{sec: decreasing cluster}

As highlighted in Remark \ref{diff}, a primary challenge of this study is the presence of dependencies of all orders in the CDP model. To address this issue, we focus on analyzing the \textbf{decreasing cluster of an edge}, a concept defined in detail below.

Given a collection of clocks $U \in [0,1]^{\mathcal{E}^d}$ and two edges $e$ and $f$, we say that \textbf{$e$ is bigger than $f$} (with respect to $U$) if $U(e) > U(f)$.  A self-avoiding path $P$, consisting of edges $e_0, e_1,...,e_{n}$, is called \textbf{decreasing} if, for all $ i \in \{1,\dots, n\}$, the edge $e_{i-1}$ is bigger than $e_{i}$. 

For $\Lambda\subset\mathcal{E}^d$, $e\in\mathcal{E}(\Lambda)$, and $U\in [0,1]^{\mathcal{E}^d}$,  we define the \textbf{$\Lambda$-decreasing cluster} of $e$ on $U$ as the set $\mathcal{C}^{\Lambda}_{e} (U)$ of edges $f \in \Lambda$ such that there exists a decreasing path, entirely contained in $\Lambda$, starting from $e$ and ending in $f$. For convenience, we assume that $e$ belongs to its decreasing cluster. When $\Lambda = \mathcal{E}^d$, we omit $\Lambda$ from the notation and simply write $\mathcal{C}_{e}(U)$. Also, for $\Gamma \subset \E^d$, we write 
\begin{equation}\label{cluster2}
    \mathcal{C}_{\Gamma}(U)=\bigcup_{e \in \Gamma}\mathcal{C}_e(U). 
\end{equation}

We stress that the decreasing cluster depends only on the clock variables of the edges and not on their states.

Several authors have used an idea similar to that of the decreasing cluster. For instance, in \cite{AM} and \cite{SS}, this idea is employed to demonstrate that correlations between local events decay at least exponentially fast in the majority percolation and CDP models, respectively. More recently, \cite{coletti2024fluctuations} applied the decreasing cluster framework in the context of the parking process; see also \cite{ritchie2006construction}. Additionally, \cite{ráth2024locallimitrandomdegree} discusses its application in the analysis of random degree models.

Given $\Gamma \subset \E^d$, let $\mathcal{F}_{\Gamma}=\{U:|\mathcal{C}_{\Gamma}(U)|<\infty\}$. In what follows, we show that $\mathcal{C}_{\Gamma}$ is almost surely finite.

\begin{proposition}\label{prop:clusterfinito}
For every finite $\Gamma \subset \E^d$, it holds that $\mathbb{P}(\mathcal{F}_{\Gamma})=1$.
\end{proposition}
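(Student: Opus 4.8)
The plan is to reduce the claim to a single edge and then rule out, by a first-moment argument, the existence of an infinite decreasing path from that edge. Since $\Gamma$ is finite and $\mathcal{C}_{\Gamma}(U)=\bigcup_{e\in\Gamma}\mathcal{C}_{e}(U)$ by \eqref{cluster2}, and a finite union of finite sets is finite, it suffices to prove that $\mathbb{P}(|\mathcal{C}_{e}(U)|<\infty)=1$ for an arbitrary fixed edge $e\in\mathcal{E}^d$.

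Next I would pass from an infinite cluster to an infinite path. Consider the rooted tree whose vertices are the decreasing self-avoiding paths starting at $e$, where the parent of a path is obtained by deleting its last edge. Since $\mathbb{L}^d$ is locally finite---each edge is adjacent to at most $4d-2$ others---this tree is finitely branching. If $\mathcal{C}_{e}(U)$ is infinite, then infinitely many edges are reached by decreasing paths, so the tree has infinitely many vertices; by König's lemma it then contains an infinite branch, i.e.\ an infinite decreasing path emanating from $e$. Hence $\{|\mathcal{C}_{e}(U)|=\infty\}\subseteq\{\text{there is an infinite decreasing path from }e\}$.

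The heart of the argument is the estimate on decreasing paths of a given length. Because the clocks are i.i.d.\ with a continuous law, for a fixed self-avoiding path with edges $e_0=e,e_1,\dots,e_n$ the values $U(e_0),\dots,U(e_n)$ are almost surely distinct and all $(n+1)!$ orderings are equally likely; thus the probability that this path is decreasing equals $1/(n+1)!$. As the number of self-avoiding paths with $n+1$ edges issuing from $e$ is at most $(4d-2)^{n}$, a union bound gives
$$\mathbb{P}\bigl(\text{there is a decreasing path with }n+1\text{ edges from }e\bigr)\leq\frac{(4d-2)^{n}}{(n+1)!}.$$
An infinite decreasing path contains decreasing subpaths of every length, so the probability of its existence is bounded by the right-hand side for all $n$; letting $n\to\infty$, the factorial dominates the exponential and this bound tends to $0$. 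Combining the three steps yields $\mathbb{P}(|\mathcal{C}_{e}(U)|<\infty)=1$ and hence $\mathbb{P}(\mathcal{F}_{\Gamma})=1$.

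I expect the only genuinely delicate point to be the reduction from an infinite cluster to an infinite path: one must invoke local finiteness and König's lemma carefully, together with the almost-sure distinctness of the clock values that guarantees strict monotonicity is well defined. Once this is in place, the super-exponential decay $1/(n+1)!$ against the exponential growth $(4d-2)^{n}$ of the path count makes the first-moment bound immediate.
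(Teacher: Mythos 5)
Your proof is correct and takes essentially the same route as the paper's: a first-moment union bound in which the number of decreasing self-avoiding paths of length $n$ starting from a fixed edge grows at most exponentially while each fixed path is decreasing with probability $1/(n+1)!$, so the bound tends to $0$ and a finite union over $e\in\Gamma$ finishes the argument. The only difference is that you make explicit, via K\"onig's lemma, the passage from an infinite decreasing cluster to arbitrarily long decreasing paths, a step the paper asserts without comment (and which can also be seen directly, since an infinite cluster must contain edges at arbitrary distance from $e$).
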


\begin{proof}
For $e \in \mathcal{E}^d$ and $n \in \N$, let $A_{n,e}$ be the event that there is a decreasing self-avoiding path of size $n$ starting at $e$. Note that $\{|\mathcal{C}_e|= \infty\} \subset A_{n,e}$, for all $n \in \mathbb{N}$. Hence,
\begin{equation*}
\mathbb{P}(|\mathcal{C}_e|= \infty) \leq \frac{2d(2d-1)^{n-1}}{n!},\mbox{ for all }n\in\N.
\end{equation*}
Since
\begin{equation*}
\mathcal{F}_{\Gamma}^c = \bigcup_{e \in \Gamma} \{|\mathcal{C}_e|= \infty\}.
\end{equation*}
the proof is complete.
\end{proof}

Next, we will present the main result of this section. Before that, we need to introduce further notation. For $E \subset \mathcal{E}^d$, let $$\Pi_{E^c}:[0,1]^{\mathcal{E}^d}\longrightarrow[0,1]^{E^c}$$ be the canonical projection of $[0,1]^{\mathcal{E}^d}$ onto $[0,1]^{E^c}$. That is, for each $U\in[0,1]^{\mathcal{E}^d}$, $\Pi_{E^c}(U)$ is the restriction of $U$ to the set of edges $E^c$. We say that $R$ is a \textbf{modification} of $U$ on $E$ if $R \in [0,1]^{E} \times {\Pi_{E^c}(U)}$, meaning $R$ agrees with $U$ on all edges in $E^c$.

Given $y\in\Z^d$, we denote by $\deg_t(y,U)$ the number of open edges incident to $y$ in the configuration $\omega_t(U)$. In symbols,
\begin{equation*}\label{def: random degree}
    \deg_t(y,U)=\#\{e\in\mathcal{E}^d\colon \omega_t(U,e)=1 \mbox{ and } e\sim y \}.
\end{equation*}
\begin{definition}
Given $U\in[0,1]^{\mathcal{E}^d}$, let $R$ be a modification of $U$ on $\E(\Lambda_n) \cup \partial^e \Lambda_n$. We say that $R$ \textbf{simplifies the boundary of} $\Lambda_n$ in $U$ at time  $t$ if, for every $e = \langle x, y \rangle \in \partial^e \Lambda_n$, with $x \in \Lambda_n$, it holds that
\begin{enumerate}
    \item[1)] if $\omega_t(U,e) =0$, then $R(e)>t$;
     \item[2)] if $\omega_t(U,e) =1$, then $R(e)= U(e)$ and, for all $s < R(e)$, we have $\deg_s(x,R) < k$.
\end{enumerate}
\end{definition}


It is worth noting the property of \textbf{state temporal consistency}, which ensures that the state of any edge $g$ at time $s:=U(g)$ remains unchanged for all $r>s$. 
Consequently, if $U(g)=R(g)$,  then $ \omega_s(U,g) = \omega_s(R,g)$ if, and only if,  $\omega_r(U,g) = \omega_r(R,g)$ for all $r\geq s$.

The following result establishes that if a modification of a collection of clocks within a box satisfies properties 1) and 2) of Definition 1, then the states of the edges outside the box remain identical for both collections of clocks.

\begin{lemma} \label{teo:PMF}
 Let $U \in [0,1]^{\E}$, and let $R$ be a modification of $U$ in $\mathcal{E}(\Lambda_n) \cup \partial^e\Lambda_n$ that simplifies the boundary of $\Lambda_n$. Then, for fixed $t\in[0,1)$, it holds that
\begin{equation}\label{Markov}
\omega_t(R,f)=\omega_t(U,f), \: \text{ for all } f \in \mathcal{E}(\Lambda_n)^c.
\end{equation} 
\end{lemma}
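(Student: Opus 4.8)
The plan is to argue by induction on the clock values, exploiting the fact that the state of any edge at time $t$ under a collection of clocks $W$ is determined solely by the clocks of edges in its decreasing cluster, which is finite by Proposition \ref{prop:clusterfinito}. The key structural observation is that the degree of any exterior vertex $y\in\Lambda_n^c$ involves only edges of $\mathcal{E}(\Lambda_n)^c$: every edge incident to such a $y$ has an endpoint outside $\Lambda_n$, hence lies in $\mathcal{E}(\Lambda_n)^c$. Thus, to propagate equality of states outward, I only need to control how the boundary edges $\partial^e\Lambda_n$ feed into the degrees of their exterior endpoints, and this is exactly what conditions 1) and 2) are designed to do.

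First I would dispose of the edges that are closed at time $t$ under both clocks. If $f\in\mathcal{E}(\Lambda_n)^c$ has $U(f)>t$, then either $f\in(\E(\Lambda_n)\cup\partial^e\Lambda_n)^c$, where $R(f)=U(f)>t$, or $f\in\partial^e\Lambda_n$ with $\omega_t(U,f)=0$, whence condition 1) gives $R(f)>t$; in both cases $\omega_t(R,f)=0=\omega_t(U,f)$ (here $t<1$ is needed so that $R(f)\in(t,1]$ is available). Likewise, a boundary edge with $\omega_t(U,f)=0$ and $U(f)\le t$ satisfies $R(f)>t$ by condition 1), so it is closed at time $t$ under both clocks. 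In particular, all such edges contribute $0$ to every degree at times $s\le t$ under both $U$ and $R$.

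It remains to treat the set $D^\ast$ of edges $f\in\mathcal{E}(\Lambda_n)^c$ with $U(f)\le t$ that are either exterior or boundary edges with $\omega_t(U,f)=1$; for all of these $R(f)=U(f)$ (the boundary case by condition 2)). I claim that every $f\in D^\ast$ opens at the same time under $U$ and $R$, equivalently $\omega_s(R,f)=\omega_s(U,f)$ for all $s$ (by state temporal consistency and monotonicity). Since the state of any edge depends only on the clocks in its decreasing cluster, finite by Proposition \ref{prop:clusterfinito}, this is proved by strong induction on the clock value $U(f)$. Fix $f=\langle a,b\rangle\in D^\ast$ and assume the claim for every edge of $D^\ast$ with strictly smaller clock. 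For an exterior endpoint $z\in\{a,b\}\cap\Lambda_n^c$ and any $s<U(f)$, every edge open at $s$ and incident to $z$ lies in $\mathcal{E}(\Lambda_n)^c$ and has clock $<U(f)$; by the previous paragraph and the inductive hypothesis such an edge has the same state under $U$ and $R$, so $\deg_s(z,R)=\deg_s(z,U)$, and the opening constraint at $z$ holds under $R$ if and only if it holds under $U$. If $f$ is exterior, both endpoints are of this type and the conclusion follows. If $f$ is a boundary edge with $\omega_t(U,f)=1$, its exterior endpoint is handled as above, while condition 2) gives $\deg_s(a,R)<k$ for all $s<R(f)$ at the interior endpoint $a\in\Lambda_n$; hence the constraint holds at both endpoints under $R$ and $f$ opens at $R(f)=U(f)$, as under $U$. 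This closes the induction and gives $\omega_t(R,f)=\omega_t(U,f)$ for every $f\in D^\ast$, hence for all $f\in\mathcal{E}(\Lambda_n)^c$.

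The main obstacle is the circular dependency between the boundary edges and the exterior vertices: the opening of a boundary edge depends on the degree history of its exterior endpoint, which in turn depends on the states of other (possibly boundary) edges incident to that vertex. This is resolved by ordering the analysis along increasing clock values—that is, along decreasing paths—so that each degree computation at a time $s<U(f)$ refers only to edges already resolved by the inductive hypothesis; Proposition \ref{prop:clusterfinito} guarantees that this ordering is well-founded on a finite set, so a minimal element exists and the induction is legitimate.
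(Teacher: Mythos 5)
Your proof is correct and follows essentially the same route as the paper's: dispose of the $t$-closed boundary edges via condition 1), then induct along increasing clock values inside the (almost surely finite, by Proposition \ref{prop:clusterfinito}) decreasing cluster, using condition 2) at interior endpoints of open boundary edges and the fact that degrees of exterior vertices involve only edges of $\mathcal{E}(\Lambda_n)^c$. The paper formalizes the induction by enumerating the decreasing cluster restricted to $G^c=\bigl(\mathcal{E}(\Lambda_n)\cup(\partial^e\Lambda_n\setminus A)\bigr)^c$ in increasing clock order, which is exactly the well-foundedness fix you invoke in your final paragraph.
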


\begin{proof}
Fix $t \in [0,1)$ and let 
\[
A = \{ f \in \partial^e \Lambda_n : \omega_t(U,f) = 1\}, \qquad 
G = \mathcal{E}(\Lambda_n) \cup (\partial^e \Lambda_n \setminus A).
\]
For each $f \in \partial^e \Lambda_n \setminus A$, we have $\omega_t(U,f)=0$. Since $R$ simplifies the boundary, it follows that $\omega_t(R,f)=0$ as well. If $f \in G_n^c$ and $U(f)>t$, then $R(f)=U(f)>t$, which implies $\omega_t(U,f)=\omega_t(R,f)=0$. Hence, it suffices to prove \eqref{Markov} for $f \in G_n^c$ such that $U(f)\le t$. 

Let us prove that $\omega_t(U,g)=\omega_t(R,g)$ for all $g$ in the decreasing cluster $\mathcal{C}_f^{\Gamma}(U)$ with $\Gamma=G^c$, which includes $f$. Since $\mathcal{C}_f^{\Gamma}(U)$ is \textit{a.s.} finite, we enumerate its edges as $f_1,\ldots,f_m$ so that 
$U(f_1)<\cdots<U(f_m)=U(f)\le t$.  
Let $F_i=\{f_1,\ldots,f_i\}$ and $s_i=U(f_i)=R(f_i)$.  
For each $i$, write $f_i=\langle x_i,y_i\rangle$ and let $v_i\in\{x_i,y_i\}$.

At time $s_i$, the state of $f_i$ depends only on the open edges adjacent to $v_i$ with smaller clock values. By temporal consistency, for both $T\in\{U,R\}$,
\[
\{h\in \partial^e\Lambda_n\setminus A : h\sim v_i,\, \omega_{s_i}(T,h)=1\}=\emptyset.
\]
Hence, the open edges incident to $v_i$ at time $s_i$ belong to $\mathcal{E}(\Lambda_n)\cup F_{i-1}$. 

If $v_i\in\partial\Lambda_n$, then $f_i\in A$, so that $\omega_t(U,f_i)=1$.  
Since $R$ simplifies the boundary, we have $\deg_t(v_i,R)<k$. If $v_i\notin\Lambda_n$, then no internal edge of $\Lambda_n$ is incident to $v_i$. Assuming inductively that $\omega_{s_j}(U,f_j)=\omega_{s_j}(R,f_j)$ for all $j<i$, we conclude that
\[
\{h\in F_{i-1}: h\sim v_i,\,\omega_{s_i}(U,h)=1\}
=\{h\in F_{i-1}: h\sim v_i,\,\omega_{s_i}(R,h)=1\},
\]
which implies $\omega_{s_i}(U,f_i)=\omega_{s_i}(R,f_i)$. By induction, this holds for every $i$, and hence for all edges in $\mathcal{C}_f^{\Gamma}(U)$, including $f$. Therefore, $\omega_t(U,f)=\omega_t(R,f)$ for all $f\in G^c$, proving the lemma.
\end{proof}

\section{Proof of Theorem \ref{thm: trifurcação}}\label{sec: proof of the main theorem}

In this section, we will show Theorem \ref{thm: trifurcação}. We shall divide the proof into two parts. First, we will show that the number of infinite clusters must be 0, 1, or $\infty$, almost surely. This will be addressed in Section \ref{sec:01infty}. Next, in Section \ref{sec:notInfty}, we will rule out the possibility of having infinitely many clusters, thereby completing the proof.

\subsection{The number of infinite clusters is either 0, 1, or infinity} \label{sec:01infty}

Our approach is reminiscent of the work of Newman and Schulman \cite{newman1981infinite}. Specifically, we assume, with positive probability, the existence of $m$ infinite open clusters, $2\leq m <\infty$. We then select a sufficiently large box that intersects all the existing clusters and modify the configuration within this box in such a way that, with positive probability, all clusters merge into $m-1$ infinite clusters, leading to a contradiction.

This strategy requires careful handling for two key reasons. First, the CDP process does not satisfy the finite-energy property, requiring a more elaborate construction to replace the original argument in \cite{newman1981infinite}. Second, the model exhibits infinite-range dependencies, meaning that local configuration changes within a box can potentially influence the configuration outside the box.   

Let $\mathcal{N}_t$ denote the number of infinite open clusters in the temporal configuration $\omega_t$, and define the event $$I_{t,m}=\{\mathcal{N}_t=m\},\,\,\,\,m=1,\dots,\infty.$$

\begin{proposition}\label{prop: como}
   For all $t\in[0,1)$, and for every fixed constraint $k\leq 2d$, it holds that
\begin{equation*}
    \P(I_{t,m})=0
\end{equation*}  
for all $1<m<\infty$.
\end{proposition}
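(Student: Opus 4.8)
The plan is to adapt the Newman--Schulman merging strategy to the constrained-degree setting, using Lemma~\ref{teo:PMF} as the crucial tool that replaces the finite-energy condition. Suppose, for contradiction, that $\P(I_{t,m})>0$ for some fixed $m$ with $1<m<\infty$. By translation ergodicity, the number of infinite clusters is an almost-sure constant, so in fact $\P(I_{t,m})=1$. I would then choose a box $\Lambda_n$ large enough that, with positive probability, $\Lambda_n$ intersects all $m$ infinite clusters; this is possible because each infinite cluster is unbounded and there are finitely many of them, so by taking $n\to\infty$ the probability that $\Lambda_n$ meets every infinite cluster tends to $1$. Fix such an $n$ and condition on the event $D_n$ that $\Lambda_n$ meets all $m$ clusters.

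The heart of the argument is the local modification. Conditioned on $D_n$, I would construct a modification $R$ of the clocks $U$ on $\mathcal{E}(\Lambda_n)\cup\partial^e\Lambda_n$ with two properties: first, $R$ \emph{simplifies the boundary of} $\Lambda_n$ in the sense of Definition~1, so that by Lemma~\ref{teo:PMF} the configuration outside $\Lambda_n$ is unchanged, i.e.\ $\omega_t(R,f)=\omega_t(U,f)$ for all $f\in\mathcal{E}(\Lambda_n)^c$; and second, the rewired interior edges create a path inside $\Lambda_n$ joining (at least) two of the distinct infinite clusters. The first property guarantees that no infinite cluster outside the box is accidentally destroyed and that the external parts of all clusters are preserved intact; the second reduces the count from $m$ to at most $m-1$. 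The delicate point is that the degree constraint must be respected throughout: I would need to open interior edges in a decreasing (or otherwise carefully time-ordered) fashion so that at each attempted opening time the relevant end-vertex degrees remain strictly below $k$, and simultaneously ensure that the boundary vertices in $\partial\Lambda_n$ keep degree $<k$ up to their clock times, as required by condition~2) of the definition of simplifying the boundary. Because the clocks are i.i.d.\ uniform and the box is finite, the set of clock configurations $R$ realizing this rewiring has positive Lebesgue measure, so the modification occurs with positive probability.

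The main obstacle I anticipate is precisely reconciling the degree constraint with the simplification requirement. Unlike ordinary percolation, I cannot simply declare the connecting edges open: I must exhibit a collection of clock values inside the box for which the CDP dynamics actually open the desired edges, which forces me to budget degrees carefully at every vertex along the connecting path and at the boundary vertices whose external open edges are retained. A natural way to handle this is to first close (by assigning clock value $>t$) all interior edges that are not needed, freeing up degree at the relevant vertices, and then open a sparse connecting path with a strictly increasing clock sequence so that each edge opens when both endpoints still have low degree; one must check this is consistent with retaining the open boundary edges counted in condition~2). Combining the positive probability of $D_n$ with the positive probability of the valid rewiring, and using that the modification strictly decreases the number of infinite clusters while leaving the exterior invariant, yields a configuration with $m-1$ infinite clusters on a set of positive probability. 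Since $m-1\ge 1$, this contradicts the almost-sure constancy $\mathcal{N}_t=m$, completing the proof.
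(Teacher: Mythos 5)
Your overall strategy is the same as the paper's (ergodicity to get $\P(I_{t,m})=1$, a large box meeting all $m$ clusters, and a clock modification that simplifies the boundary so that Lemma~\ref{teo:PMF} preserves the exterior, replacing finite energy), and your degree-budgeting sketch is essentially how the paper's Claims~\ref{lem:modif1} and~\ref{lem: final} proceed. However, there is a genuine gap at the final step: you assert that the one-shot modification ``reduces the count from $m$ to at most $m-1$,'' and later that it ``strictly decreases the number of infinite clusters.'' This does not follow. Closing the unneeded interior edges of $\Lambda_n$ can \emph{split} an existing infinite cluster into two or more infinite pieces (a cluster may have several infinite arms outside the box that are connected only through $\mathcal{E}(\Lambda_n)$). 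After your modification the number of infinite clusters equals $N^{\mathrm{ext}}-1$, where $N^{\mathrm{ext}}\ge m$ is the number of infinite exterior remnants; nothing in your argument rules out $N^{\mathrm{ext}}>m$, in which case the modified configuration has $\ge m$ infinite clusters and no contradiction with $\mathcal{N}_t=m$ is obtained.

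The paper closes this hole precisely by performing the modification in \emph{two} steps, each producing a positive-probability set of clocks, and invoking the almost-sure constancy $\mathcal{N}_t=m$ at the \emph{intermediate} stage. Step 1 (close everything in $\mathcal{E}(\Lambda_n)$ except the retained paths $\gamma_1,\gamma_2$) yields a set $\mathcal{U}'$ with $\P(\mathcal{U}')>0$; since $\mathcal{N}_t=m$ almost surely, one concludes that almost every configuration in $\mathcal{U}'$ still has exactly $m$ infinite clusters — i.e.\ splitting almost surely does not occur on $\mathcal{U}'$ (this is the content of condition A4 and the remark ``we disregard this possibility since $\mathcal{N}_t=m$ holds almost surely''). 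Only then does Step 2 open the connecting path $\gamma$ inside $\Lambda_{n-1}$, which merges the two distinct infinite clusters of $v_1$ and $v_2$ and changes nothing else, so the resulting positive-probability set lies in $\{\mathcal{N}_t=m-1\}$, giving the contradiction. Equivalently, one needs \emph{two} modifications (close-only, and close-plus-connect) compared against the same a.s.\ constant; a single modification, as in your proposal, cannot exclude the splitting scenario. A secondary, more minor point: to make ``positive probability of the rewiring'' rigorous you must first fix deterministically (by a finite partition argument, as in Claim~\ref{claim:DGamma}) the boundary states $A\subset\partial^e\Lambda_n$, the attachment vertices, and the retained paths, since the product-measure factorization $\P(\mathcal{E}\times\Pi(\mathcal{D}))=\P(\mathcal{E})\P(\Pi(\mathcal{D}))$ requires the modification rule to be deterministic rather than configuration-dependent.
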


\begin{proof} Throughout the proof, we will rely on a series of claims, the proofs of which are provided in Section \ref{proof_claims}. 

Fix $t\in[0,1)$. Suppose that $\mathbb{P}(I_{t,m}) > 0$ for some $1< m < \infty$. Since $\mathbb{P}$ is translation-invariant and ergodic, and the event $I_{t,m}$ is also translation-invariant, it follows that $\P(I_{t,m})=1$.

Let $\mathcal{A}_{t,n} = \{ \Lambda_{n} \: \text{ intersects all the $m$ infinite open clusters at time $t$} \}$. 
Since $\mathcal{A}_{t,n} \uparrow I_{t,m}$ when $n$ goes to infinity, there exists $n$ large such that the event $\mathcal{A}_{t, n}$ occurs with positive probability.

Now, for each infinite cluster, choose a vertex in its intersection with $\partial \Lambda_n$. If there is a choice for the vertices then we pick them according to some predetermined ordering of all vertices.
More precisely, for each $i\in[m]$, let $v_i \in \partial \Lambda_n$ be such that $v_i$ is connected to infinity outside $\Lambda_{n-1}$ (i.e., by a path using no edges in $\E(\Lambda_{n-1})$), with $v_i \not \leftrightarrow v_j$ for $i\ne j \in [m]$, and write $V = \{v_1, \dots, v_m\}$. Our goal is to modify the states of the edges inside the box $\Lambda_n$ in such a way that we connect two vertices of $V$. To attain this, it will be convenient to take each $v_i$ in the set $\{x \in \partial \Lambda_n \colon x \sim  y, \text{ for some } y \in \Lambda_{n-1}  \}$, so we can connect $v_1$ to $v_2$ using only edges inside the box.

Observe that if we fix $V$ as above, then an open path connecting $v_i$ to infinity outside $\Lambda_{n-1}$ may contain edges in $\E(\partial \Lambda_n)$. In this case, for each $v_i$, we choose a vertex $w_i \in \partial \Lambda_n$ such that $w_i$ is connected to infinity outside $\Lambda_{n}$, and set $W = \{w_1, \dots, w_n\}$. Let $\gamma_i$ denote the path with vertices in $ \Lambda_{n-1}^c$ connecting $v_i$ to $w_i$, and write $\Gamma = \{\gamma_1, \dots, \gamma_m\}$. Figure \ref{fig:Boxn} illustrates this construction; for simplicity, the components of $v_3, \dots, v_m$ are omitted.

Next, we modify the configuration in two steps. In the first step, we close the edges in $\E(\Lambda_n)$, except those in $\gamma_i$ which are kept open. The states of edges with at least one endpoint not in $\Lambda_n$ are kept unchanged. 
In the second step, we construct a path $\alpha$ entirely contained in $\Lambda_n$ that connects the components containing the vertices $v_1$ and $v_2$.

\begin{figure}[!h]
\begin{center}
\begin{tikzpicture}[scale=0.5]

\draw[-, thick]  (0,0) -- (10,0);
\draw[-, thick]  (10,0) -- (10,1);
\draw[-, thick]  (10,3) -- (10,7);
\draw[-, thick]  (10,10) -- (0,10);
\draw[-, thick]  (0,10) -- (0,3);
\draw[-, thick]  (0,1.5) -- (0,0);

\draw[-, thick]  (1,1) -- (9,1);
\draw[-, thick]  (9,1) -- (9,9);
\draw[-, thick]  (9,9) -- (1,9);
\draw[-, thick]  (1,9) -- (1,1);

\filldraw (10,10) circle (2.5pt);
\filldraw (0,1.5) circle (2.5pt);
\filldraw (9,1) circle (2.5pt);
\filldraw (0,6) circle (2.5pt);
\filldraw (1,6) circle (2.5pt);
\filldraw (10,1) circle (2.5pt);
\node[right] at (10,10) {$w_1$};
\node[left] at (0,1.5) {$w_2$};
\node[right] at (10,1) {$v_1$};
\node[left] at (9,1.4) {$u_1$};
\node[left] at (-0.05,6.25) {$v_2$};
\node[right] at (0.9,6.3) {$u_2$};
\node[above] at (0.5,10) {\large{$\Lambda_n$}};
\node[above] at (1.5,9) {\small{$\Lambda_{n-1}$}};
\node[right] at (13,5) {$\gamma_1$};
\node[left] at (-3,4) {$\gamma_2$};
\node[right] at (5,3) {$\alpha$};


\draw [densely dotted] [line width= 0.5 mm] (10,10) to[out=80, in=250] (15,15);
\draw [densely dotted] [line width= 0.5 mm] (0,1.5) to[out=240, in=15] (-8,-1);
\draw [loosely dashed] [line width= 0.5 mm] (0,1.5)-- (0,3);
\draw [loosely dashed] [line width= 0.5 mm] (10,10)-- (10,7);
\draw [loosely dashed] [line width= 0.5 mm] (10,3)-- (10,1);

\draw [loosely dashed] [line width= 0.5 mm](0,3) to [out=145, in=270] (-3,4);
\draw [loosely dashed] [line width= 0.5 mm] (10,3) to [out=20, in=270] (13,5);
\draw [loosely dashed] [line width= 0.5 mm] (13,5) to [out=90, in=280] (10,7);
\draw [loosely dashed] [line width= 0.5 mm] (-3,4) to [out=90, in=210] (0,6);
\draw [densely dotted] [line width= 0.5 mm] (1,6)--(0,6);
\draw [densely dotted] [line width= 0.5 mm](10,1)-- (9,1);

\draw [solid] [line width= 0.5 mm] (1,6) to [out=280, in=100] (9,1);

\end{tikzpicture}
\caption{\small{Boxes $\Lambda_n$ and $\Lambda_{n-1}$ with the sites $w_i, v_i$, for $i\in \{1,2\}$, highlighted. The paths $\gamma_1$ and $\gamma_2$ connect $v_1$ to $w_1$ and $v_2$ to $w_2$, respectively. We build a path $\alpha$ by modifying the states inside of $\Lambda_n$.}}\label{fig:Boxn} 
\end{center}
\end{figure}

Let us formalize the discussion above. 
Fix $V = \{v_1, \dots, v_m\} \subset \{x \in \partial \Lambda_n \colon x \sim  y, \text{ for some } y \in \Lambda_{n-1}  \}$, $W = \{w_1, \dots, w_m\} \subset \partial \Lambda_n$,  and $\Gamma = \{ \gamma_1, \dots, \gamma_m\}$, where $\gamma_i$ is a finite self-avoiding path connecting $v_i$ to $w_i$ outside $\Lambda_{n-1}$, $i\in[m]$. Let $A\subset\partial^e\Lambda_n$ and define $\mathcal{D}_{t,n}^{\Gamma,A} $ as the event where

\begin{enumerate}
\item[($a$)]  $v_i \longleftrightarrow \infty$ outside $\Lambda_{n-1}$ for every $i\in[m]$;

\item[($b$)] $v_i$ and $v_j$ are {\bf not} connected for $i\neq j$;

\item[($c$)] $\gamma_i$ is open for every $i\in[m]$;

\item[($d$)] $w_i \longleftrightarrow \infty$ outside $\Lambda_{n}$, for every $i\in[m]$;

\item[($e$)] $e$ is $t$-open for all $e\in A$;

\item[($f$)] $f$ is $t$-closed for all $f \in \partial^e\Lambda_n\cap A^c$.

\end{enumerate}

\begin{claim} \label{claim:DGamma}
There exist sets $V$, $W$, $\Gamma$, and $A$, as above, such that 
\begin{equation} \label{claim:C1}
\mathbb{P} \left( \mathcal{D}_{t,n}^{\Gamma,A} \right) >0,
\end{equation}
for all $n$ sufficiently large.
\end{claim}

Assuming Claim 1, we proceed with a convenient modification of the configurations in $\mathcal{D}_{t,n}^{\Gamma,A}$, which will be carried out in two steps.\\


\textit{Step 1.}
Let $U \in  \mathcal{D}_{t,n}^{\Gamma,A}$. Suppose there exists a configuration $U'$, obtained from $U$ by modifying the clocks on $\mathcal{E}(\Lambda_n) \cup \partial^e \Lambda_n$, such that, in $\omega_t(U')$, the following conditions hold:

\begin{enumerate}    
    \item[A1.] all edges of $\gamma_1$ and $\gamma_2$ are open;

    \item[A2.] all the remaining edges in $\E(\Lambda_n)$ are closed;
    
    \item[A3.] the states of all edges in $[\E(\Lambda_n)]^c$ are unchanged and coincide with their states in \( \omega_t(U) \);

    \item[A4.] $\mathcal{N}_t=m$.

\end{enumerate}

Observe that, in $\omega_t(U')$, the configuration outside $\Lambda_n$ remains identical to that of $\omega_t(U)$, and the paths connecting $v_1$ to $w_1$ and $v_2$ to $w_2$ stay open. Consequently, $v_1$ and $v_2$ continue to belong to infinite components. Although closing edges inside the box could, in principle, increase the number of infinite components, we disregard this possibility since $\mathcal{N}_t=m$ holds almost surely.

Let $\mathcal{U}'$ denote the set of clocks $U'$ for which there exists $U \in \mathcal{D}_{t,n}^{\Gamma,A}$ such that conditions A1, A2, and A3 occur.
We aim to show that $\mathbb{P}(\mathcal{U}') >0$. 
As before, let
\begin{equation*}
  G_n=\mathcal{E}(\Lambda_n)\cup (\partial^e\Lambda_n-A),  
\end{equation*}
and 
\begin{equation} \label{eq:abreG}
\mathcal{E}_{t,n}^{\Gamma,A}=\left\{T\in[0,1]^{G_n}\colon \bigcap_{\substack{e\in G_n\cap(\mathcal{E}(\gamma_1) \cup \mathcal{E}(\gamma_2))\\f\in G_n\cap(\mathcal{E}(\gamma_1) \cup \mathcal{E}(\gamma_2))^c}}\{T(e)\leq t~;~ T(f)>t\}  \right\}.
\end{equation}

With a slight abuse of notation, identifying $\mathbb{P}$ with its marginals, and using \eqref{claim:C1}, we obtain 
\begin{align*}
\mathbb{P}\left( \mathcal{E}_{t,n}^{\Gamma,A} \times \Pi_{G_n^c} \left(\mathcal{D}_{t,n}^{\Gamma,A}\right) \right)=\mathbb{P}\left( \mathcal{E}_{t,n}^{\Gamma,A}\right)\cdot \mathbb{P}\left(\Pi_{G_n^c}\left(\mathcal{D}_{t,n}^{\Gamma,A}\right) \right)>0,
\end{align*}
which we use as a replacement for the lack of the finite energy property (see also \eqref{eq: positive prob}).






 The next claim shows that $ \mathcal{E}_{t,n}^{\Gamma,A} \times \Pi_{G_n^c}\left(\mathcal{D}_{t,n}^{\Gamma,A} \right) \subset \mathcal{U}'$.

\begin{claim}\label{lem:modif1} 
For $U \in \mathcal{D}_{t,n}^{\Gamma,A}$, let $R \in \mathcal{E}_{t,n}^{\Gamma,A} \times \Pi_{G_n^c}\left(\mathcal{D}_{t,n}^{\Gamma,A} \right)$ be a modification of $U$ on the set $G_n$. Then,

\begin{itemize}
    \item[a)] $R$ simplifies the boundary of $U$. In particular, if $f \in \mathcal{E}(\Lambda_n)^c$, then
\begin{equation}\label{eq: fora ta igual}
\omega_t(R,f)=\omega_t(U,f);
\end{equation}  

\item[b)] the paths $\gamma_1$ and $\gamma_2$ remain open in  $\omega_t(R)$.

\end{itemize}

\end{claim}


Writing 
\begin{equation*}
     \mathcal{U}'= \mathcal{E}_{t,n}^{\Gamma,A} \times \Pi_{G_n^c}\left(\mathcal{D}_{t,n}^{\Gamma,A} \right),
\end{equation*}
we describe the next step.

\textit{Step 2.} We now proceed to the final step of the proof, where we connect the infinite clusters of the vertices $v_1$ and $v_2$. For a given $U' \in \mathcal{U}' $, let $U''$ be a modification of $U'$ on $\mathcal{E}(\Lambda_{n-1})\cup\partial^e\Lambda_{n-1}$ such that, in $\omega_t(U'')$, the following conditions are satisfied:

\begin{enumerate}
    \item[B1.] $\omega_t(U',e)=\omega_t(U'',e)$ for all $e\in\mathcal{E}(\partial \Lambda_n \cup \Lambda_n^c)$;
    
    \item[B2.] $v_1$ and $v_2$ are connected in $\omega_t(U'')$.
\end{enumerate}

Let $\mathcal{V}$ denote the set of clocks $U''$ for which there exists $U' \in \mathcal{U'}$ such that conditions B1 and B2 hold. Note that, if $T \in \mathcal{V}$, then there are $m-1$ infinite clusters in $\omega_t(T)$. We will show that $\mathbb{P}(\mathcal{V})>0$.

Since $v_1$ and $v_2$ are points on the face of $\Lambda_n$, there exist $u_1, u_2 \in \Lambda_{n-1}$ such that $u_1 \sim v_1$ and $u_2 \sim v_2$. Because $u_1, u_2 \in \Lambda_{n-1}$, there exists a self-avoiding path $\alpha$ within $\Lambda_{n-1}$ that connects $u_1$ to $u_2$. This path $\alpha$ can be empty if $u_1 = u_2$; see Figure \ref{fig:Boxn}. Define $\gamma$ as the concatenation
\begin{equation} \label{eq: defgamma}
    \gamma = \langle v_1, u_1\rangle \alpha \langle v_2,u_2\rangle .
\end{equation}

Note that $\gamma$ is a self-avoiding path connecting $v_1$ to $v_2$, with all its edges in $\E(\Lambda_n)\setminus\E(\partial\Lambda_n)$.
For $H_n=\partial^e\Lambda_{n-1}\cup\mathcal{E}(\Lambda_{n-1})$, define
\begin{equation*} \label{eq: abreF}
\mathcal{E}_{t,n}^{\gamma} =\left\{T\in[0,1]^{H_n}\colon \bigcap_{\substack{e\in  H_n\cap\mathcal{E}(\gamma)\\f\in H_n-\mathcal{E}(\gamma)}}\{T_e\leq t~;~T_f>t\}   \right\}.
\end{equation*}
As before, it follows that
\begin{align}\label{eq: positive prob}
\mathbb{P}\left( \mathcal{E}_{t,n}^{\gamma} \times \Pi_{H_n^c} \left(\mathcal{U}' \right) \right)>0.
\end{align}

\begin{claim} \label{lem: final}
Let $U' \in \mathcal{U}' $ and let $R \in \mathcal{E}_{\gamma,n}^{t} \times \Pi_{H_n^c}\left(\mathcal{U}' \right)$ be a modification of $U'$ on the set $H_n$. Then,

\begin{itemize}
    \item[a)] $v_1$ and $v_2$ are connected in $\omega_t(R)$;
    \item[b)] for all $e \in \mathcal{E}(\partial\Lambda_n\cup\Lambda_n^c)$, it holds that
\begin{equation}\label{eq: coincidem...}
     \omega_t(R,e) = \omega_t(U',e).
\end{equation}
\end{itemize}

\end{claim}

Claim \ref{lem: final} establishes that if $R \in \mathcal{E}_{t,n}^{\gamma} \times \Pi_{H_n^c} (\mathcal{U}')$, then $\omega_t(R)$ satisfy properties B1 and B2 above for some collection of clocks $U'\in\mathcal{U}'$. That is,  $\mathcal{E}_{t,n}^{\gamma} \times \Pi_{H_n^c} (\mathcal{U}') \subset \mathcal{V}$. In particular, $\mathbb{P}(\mathcal{V})>0
$. 

Observe that if $T \in \mathcal{V}$, the configuration $\omega_t(T)$ will contain exactly $m-1$ infinite clusters. Thus, the number of infinite components is $m-1$ with positive probability, leading to a contradiction.

\end{proof}

\subsection{Impossibility of the existence of an infinite number of infinite clusters } \label{sec:notInfty}

It remains to rule out the possibility that $\mathcal{N}_t=\infty$. To do so, we use an argument inspired by the work of Burton and Keane \cite{burton1989density}, combined with the ideas developed in the previous section.

We say that $v\in\Z^d$ is a \textbf{encounter point} for a configuration $\omega\in\{0,1\}^{\mathcal{E}^d}$ if the following conditions hold
\begin{itemize}
    \item[$i$)] $v$ belongs to an infinite cluster $C$ in $\omega$;
    \item[$ii$)]  the set $C-\{v\}$ has no finite components and exactly three infinite
components.
\end{itemize}

Burton and Keane \cite{burton1989density} showed the following geometric result: for any $\omega\in\{0,1\}^{\mathcal{E}^d}$ and any rectangle $R\subset\Z^d$, the number of encounter points of $\omega$ in $R$ is less than the number of points on the boundary of $R$. Therefore, denoting by $X_n$ the random variable that counts the number of encounter points in $\Lambda_n$, we have, for all $n\in\N$, that 
\begin{equation}\label{eq: fato de BK}
    X_n \leq |\partial \Lambda_n|.
\end{equation}

Denoting by $p$ the probability that the origin is an encounter point, and using the invariance of $\mathbb{P}$ with respect to the translations of the lattice, we obtain, for all $n\in\N$, that
\begin{equation}\label{eq: trivial}
\mathbb{E}[X_n] = p|\Lambda_n|,    
\end{equation}
where $\mathbb{E}$ denotes expectation with respect to $\P$.

Assuming the existence of infinitely many infinite clusters, we first establish that $p>0$. Then, using \eqref{eq: fato de BK}, \eqref{eq: trivial}, and the amenability of $\mathbb{L}^d$, we arrive at a contradiction.

Suppose $\mathbb{P}(\mathcal{N}_t=\infty)=1$. In this case, the event
\begin{equation*}
     \tilde{\mathcal{A}}_{t,n}:=\{U\in[0,1]^{\mathcal{E}^d}:  \Lambda_n \mbox{ intersects at least 3 infinite clusters in } \omega_t(U)  \},
\end{equation*}
has positive probability for $n$ sufficiently large. 

For such values of $n$, we can choose sets $V$, $W$, and $\Gamma$, with $|V| = |W| = 3$, and proceed as in the previous section to conclude that $\mathbb{P}(\mathcal{D}_{t,n}^{\Gamma,A}) > 0$ (see Claim \ref{claim:DGamma}). 

\begin{figure}[h!]
\begin{center}
\begin{tikzpicture}[scale=0.5]

\draw[-, thick]  (0,0) -- (10,0);
\draw[-, thick]  (10,0) -- (10,10);
\draw[-, thick]  (10,10) -- (0,10);
\draw[-, thick]  (0,10) -- (0,0);

\node[below] at (10,0) {\large{$\Lambda_n$}};
\filldraw (2,10) circle (2.5pt);
\node[right] at (0.8,10.5) {$v_1$};
\filldraw (0,8) circle (2.5pt);
\node[above] at (-0.5,8) {$v_2$};
\filldraw (10,1) circle (2.5pt);
\node[right] at (10,1.5) {$v_3$};
\filldraw (2,8) circle (2.5pt);
\node[below] at (2,8) {$v$};
\node[below] at (5.6,5) {$\beta$};
\node[above] at (1.6,8) {$\alpha$};

\draw[dotted][-, thick]  (2,10) -- (2,8);
\draw[densely dotted][-, thick]  (0,8) -- (2,8);
\draw [densely dotted][line width= 0.5 mm] (2,10) to [out=100, in=250] (10,15);
\draw [densely dotted] [line width= 0.5 mm] (10,1) to [out=0, in=180] (18,5);
\draw [loosely dashed] [line width= 0.5 mm](2,8) to [out=330, in=180] (10,1);
\draw [densely dotted][line width= 0.5 mm] (0,8) to [out=160, in=270] (-7,15);
\end{tikzpicture}
\caption{\small{$\Lambda_n$ after the modifications that generate the trifurcation point $v$.}}\label{fig:trifurcation} 
\end{center}
\end{figure}

Furthermore, by suitably modifying the clock values of the edges inside $\Lambda_n$, while preserving the states of all edges outside $\Lambda_n$, we can merge three infinite clusters at a vertex of $\Lambda_{n-1}$. In fact, we perform two successive modifications within the box $\Lambda_n$ in order to create, with positive probability, a vertex $v$ inside the box that becomes a trifurcation.

The first modification, as in Claim \ref{lem:modif1}, closes all edges inside $\Lambda_n$ while keeping the paths $\gamma_1$, $\gamma_2$, and $\gamma_3$ open (see Equation \eqref{eq:abreG}). The second modification connects $v_1$ and $v_2$ by opening a path $\alpha$; we then choose a fixed vertex $v \in \alpha$ and connect $v_3$ to $v$ by an open path $\beta$, in the same spirit as the construction in Claim \ref{lem: final} (see Figure \ref{fig:trifurcation}). These two adjustments guarantee that the vertex $v$ becomes a trifurcation.

This completes the proof of Theorem \ref{thm: trifurcação}.

\begin{remark} 
The proof of Theorem \ref{thm: trifurcação} relies on fixing the states of the boundary edges of the box $ \Lambda_n$.
The strategy used to keep these edges closed after locally modifying the clock times
was to ensure that they could become open only after the reference time $t$.
However, this approach presents a limitation when $t=1$.
Without fixing the boundary, modifications of the clock times inside the box
may alter the states of edges outside it, due to the long-range dependencies intrinsic to the model.
\end{remark}

\section{Proof of Theorem \ref{conn1}}\label{conn}

In this section, we prove Theorem \ref{conn1}, showing that the two-point connectivity function is uniformly bounded away from zero (on the set of pairs of vertices). Here, we write $\mathbb{C}_t(x)$ for the open cluster of vertex $x$ at time $t$, i.e., the set of vertices connected to $x$ by a path of $t$-open edges.

\textit{Proof of Theorem \ref{conn1}:} Given $x\in\Z^d$ and $n\in\N$, consider the events
$$\mathcal{L}_{t,n}(x)=\{x\longleftrightarrow\partial \Lambda_n\mbox{ at $t$},|\mathbb{C}_t(x)|<\infty\},$$ $$\mathcal{B}_{t,n}(x)=\{x\longleftrightarrow\partial \Lambda_n\mbox{ at $t$}\},$$ and write $\mathcal{B}_t(x)=\{x\longleftrightarrow\infty\mbox{ at time $t$}\}.$ Since $\{\mathcal{B}_{t,n}(x)\}_{n\geq 1}$ decreases to $\mathcal{B}_t(x)$ we obtain $\P(\mathcal{L}_{t,n}(x))=\P(\mathcal{B}_{t,n}(x))-\P(\mathcal{B}_{t}(x))\rightarrow 0$, when $n$ goes to infinity. Choose $n_0$ large enough such that 
\begin{equation*}\label{bound_3}
\P(\mathcal{L}_{t,n_0}(x))\leq \theta^2/4.
\end{equation*} This clearly gives $\P(\mathcal{L}_{t,n_0}^c(x)\cap\mathcal{L}_{t,n_0}^c(y))\geq 1-\tfrac{1}{2}\theta^2$.

Note that $\mathcal{B}_t(x)=\mathcal{B}_{t,n}(x)\cap\mathcal{L}_{t,n}^c(x)$, for all $n\geq 1$. Hence, by uniqueness of the infinite open cluster, we have
\begin{align*}\tau_t(x,y)\geq \P(\mathcal{B}_t(x)\cap\mathcal{B}_t(y))&= \P((\mathcal{B}_{t,n_0}(x)\cap \mathcal{L}_{t,n_0}^c(x))\cap(\mathcal{B}_{t,n_0}(y)\cap\mathcal{L}_{t,n_0}^c(y)))\\
&\geq \P(\mathcal{B}_{t,n_0}(x)\cap \mathcal{B}_{t,n_0}(y))-\tfrac{1}{2}\theta^2.
\end{align*}
Since $\mathcal{B}_{t,n}(x)$ and $\mathcal{B}_{t,n_0}(y)$ are local events, Theorem 2 in \cite{SSS}, together with translation invariance, yields  
$$\tau_t(x,y)\geq \P(\mathcal{B}_{t,n_0}(x))\P(\mathcal{B}_{t,n_0}(y))-c_1n_0e^{-\psi d(x,y)}-\tfrac{1}{2}\theta^2
\geq \tfrac{1}{2}\theta^2-c_1n_0e^{-\psi d(x,y)},$$
for some positive constants $\psi$ and $c_1$. This gives $\tau_t(x,y)>0$ for all $x,y\in\mathbb{Z}^d$ with $d(x,y)$ large enough, say $d(x,y)\geq k_0$. Since $\tau_{t}(x,y)$ is strictly positive for all $x,y$ with $d(x,y)<k_0$, the result follows.
\qed

\section{Differentiability of $\mathbb{P}$ for local events}\label{analy}

In this section, we establish the differentiability of $t \mapsto \mathbb{P}(\omega_t^{-1}(\cdot))$ for local events. This result, combined with the uniqueness of the infinite open cluster, will allow us to prove the continuity of the function $\theta(t)$ for $t \in (t_c^k, 1)$. Moreover, differentiability of $\mathbb{P}$ plays a key technical role in settings such as dynamic percolation or models where randomness is progressively revealed over time. In particular, when analyzing the sharpness of phase transitions via randomized algorithms on graphs, the differentiability of $\mathbb{P}$ is crucial (see \cite{DT,DRT}).

\begin{proposition}\label{diff2} Let $A\subset\{0,1\}^{\E}$ be a local event. Then, the function $ t \mapsto \P_t(A)=\P(\omega_t^{-1}(A))$ is differentiable on $(0,1)$.
\end{proposition}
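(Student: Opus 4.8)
The plan is to analyze the difference quotient directly, exploiting that an edge opens at most once. Since $A$ lives on a finite set $K$, there is a function $\phi\colon\{0,1\}^{K}\to\{0,1\}$ with $\mathbf{1}_A(\omega_t)=\phi(\omega_t|_K)$. For each edge $e$ set $T_e(U)=U_e$ if $e$ ever opens and $T_e(U)=\infty$ otherwise, so that $\omega_t(U,e)=\mathbf{1}\{T_e\le t\}$ and $t\mapsto\omega_t|_K$ is a step function whose jumps occur exactly at the opening times of the edges of $K$. Hence it suffices to control $\mathbb{E}\big[\mathbf{1}_A(\omega_{t+h})-\mathbf{1}_A(\omega_t)\big]$ to first order in $h$. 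For $e\in K$ and $\eta\in\{0,1\}^{K}$ I write $\psi_e(\eta)=\phi(\eta^{e\to1})-\phi(\eta^{e\to0})$, the discrete derivative in direction $e$, which depends only on $\eta|_{K\setminus\{e\}}$.

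First I would isolate the leading term. Let $J$ denote the set of edges of $K$ that open in the window $(t,t+h]$. Because the clocks are i.i.d. uniform, $\mathbb{P}(|J|\ge2)\le\binom{|K|}{2}h^2$, and on $\{J=\{e\}\}$ only coordinate $e$ of $\omega|_K$ changes, so $\mathbf{1}_A(\omega_{t+h})-\mathbf{1}_A(\omega_t)=\psi_e(\omega_t|_{K\setminus\{e\}})$. Thus, up to an $O(h^2)$ error,
\begin{equation*}
\mathbb{E}\big[\mathbf{1}_A(\omega_{t+h})-\mathbf{1}_A(\omega_t)\big]=\sum_{e\in K}\mathbb{E}\big[\mathbf{1}\{e\text{ opens in }(t,t+h]\}\,\psi_e(\omega_t|_{K\setminus\{e\}})\big]+O(h^2).
\end{equation*}
Conditioning on $U_e=s$ and using that $U_e$ has density $1$, each summand equals $\int_t^{t+h}\mathbb{E}\big[\mathbf{1}\{\deg_{s^-}(u_e)<k,\ \deg_{s^-}(v_e)<k\}\,\psi_e(\omega_t|_{K\setminus\{e\}})\mid U_e=s\big]\,ds$, where $e=\langle u_e,v_e\rangle$.

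The two key estimates are local and therefore unaffected by the infinite-range dependence. (i) Replacing $\deg_{s^-}$ by $\deg_t$ for $s\in(t,t+h]$ costs only $O(h)$: the two differ only when some edge incident to $u_e$ or $v_e$ — there are at most $2(2d-1)$ of them — has its clock in $(t,s]$, an event of probability at most $h$ for each such edge. (ii) On $\{U_e>t\}$ the edge $e$ stays closed throughout $[0,t]$, so by state temporal consistency the configuration $\omega_t$ restricted to $\mathcal{E}^d\setminus\{e\}$ is measurable with respect to $(U_g)_{g\ne e}$ and hence independent of $U_e$; this makes $c_e(t):=\mathbb{E}\big[\mathbf{1}\{\deg_t(u_e)<k,\ \deg_t(v_e)<k\}\,\psi_e(\omega_t|_{K\setminus\{e\}})\mid U_e>t\big]$ a well-defined quantity (the degrees being a.s. finite since $\omega_t$ is a.s. well defined by Proposition~\ref{prop:clusterfinito}), and it lets me replace the conditioning $\{U_e=s\}$ by $\{U_e>t\}$. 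Combining (i) and (ii), each summand equals $h\,c_e(t)+O(h^2)$, so the right-hand difference quotient converges to $\sum_{e\in K}c_e(t)$.

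Finally I would run the same computation on the window $(t-h,t]$ to obtain the left derivative. The only delicate point is where to evaluate $\psi_e$: on the single-opening event one has $\omega_{t-h}|_{K\setminus\{e\}}=\omega_t|_{K\setminus\{e\}}$, and since $e$ is closed at time $t-h$ this common value is the $e$-removed configuration, which by the no-atom property $\mathbb{P}(U_g=t)=0$ converges as $h\to0$ to $\omega_t|_{K\setminus\{e\}}$ on $\{U_e>t\}$. Running (i)--(ii) again yields the same limit $\sum_{e\in K}c_e(t)$ for the left difference quotient. Hence the two one-sided derivatives coincide and $\mathbb{P}_t(A)$ is differentiable on $(0,1)$ with derivative $\sum_{e\in K}c_e(t)$. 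I expect the main obstacle to be precisely this symmetry check — guaranteeing that the contaminating effect of $e$ being open after time $s$ (in the left-window case) does not alter the limit — together with the bookkeeping that keeps every error term at order $h^2$; both are resolved by the locality of estimate (i), the temporal-consistency and independence in estimate (ii), and the absence of atoms in the clocks, with Proposition~\ref{prop:clusterfinito} supplying the a.s. finiteness needed to pass limits under the bounded expectations.
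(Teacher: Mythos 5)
Your argument is correct, but it takes a genuinely different route from the paper's. The paper localizes $\omega_t^{-1}(A)$ via the decreasing-cluster decomposition $\Theta_{t,r}$, observes that on each localized piece the occurrence of $A$ depends only on the ordering of the clocks relative to $t$, writes $\mathbb{P}_t(A)=\sum_r f_r(t)$ with $f_r(t)=\sum_\ell |S_{t,r}^\ell|\,\frac{t^\ell}{\ell!}\cdot\frac{(1-t)^{m-\ell}}{(m-\ell)!}$, proves that the counting sets $S_{t,r}^\ell$ do not depend on $t$ (by shifting all clock values), and concludes by term-by-term differentiation with the Weierstrass M-test. You instead attack the difference quotient directly and obtain a Russo-type formula: the derivative is $\sum_{e\in K}c_e(t)$, a sum of pivotal-type terms. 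Your two key ingredients are both sound: (i) the degree constraint for $e$ opening in a short window is a local functional near its endpoints, so swapping $\deg_{s^-}$ for $\deg_t$ costs $O(h)$; and (ii) on $\{U_e>t\}$ the configuration $\omega_t$ coincides with the $e$-removed process and is therefore independent of $U_e$ — this holds because every edge in the decreasing cluster of an edge $f$ with $U_f\le t$ has clock value smaller than $U_f$, so $e$ never enters, which is precisely what the paper's machinery (Proposition \ref{prop:clusterfinito}, Lemma \ref{teo:PMF}) justifies. The left-derivative step you flag as delicate does close: each summand equals $h\,c_e(t-h)+O(h^2)$, and $c_e(t-h)\to c_e(t)$ by bounded convergence together with the no-atom property, since $c_e$ is the expectation of a fixed bounded local functional of the $e$-removed process, a.s. continuous in time at $t$. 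As for what each approach buys: yours is more elementary and probabilistic, yields an explicit expression for the derivative (a constrained-degree analogue of Russo's formula), and avoids the order-statistics bookkeeping; the paper's series representation gives uniform control on derivatives of all orders, so infinite differentiability (noted in its closing remark) follows essentially for free, whereas iterating your difference-quotient computation to higher order would require substantially more work.
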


\begin{proof} Assume that $A$ lives in $\Gamma\subset \E^d$. For $ t \in [0,1] $ and $ e \in \Gamma $, recall the definition in \eqref{cluster2} and write 

 \begin{equation}
 \mathcal{C}_{t,e} = \left \{
 \begin{array}{cc}
 \mathcal{C}_e, & \mbox{if } U_e \leq t, \\
 \{e\}, & \mbox{if } U_e > t. \\
 \end{array}
 \right.
 \end{equation}
Also, let

\[  \mathcal{C}_{t} = \bigcup_{ e \in \Gamma } \mathcal{C}_{t,e}.  \]

Denote by $\delta(u,v)$ the graph distance between the vertices $u,v\in\Z^d$. Given $\Gamma,\tilde{\Gamma}\subset\Z^d$, let $\delta(\Gamma,\tilde{\Gamma})=\inf\{\delta(x,y): x\in\Gamma, y\in\tilde{\Gamma}\}$, and write $B_r(\Gamma)=\{x\in\Z^d: \delta(x,\Gamma)\leq r\}$ for the ball of radius $r$ centered at $\Gamma$.

Let $r\in \N$, and consider the clock event

\begin{equation*}\Xi_{t,r }=\left\{ U \colon \mathcal{C}_{ t} \subset B_{r} (\Gamma) \right\}.
\end{equation*}
A standard counting argument (see \cite{AM} and \cite{SS}) yields the estimate
\begin{equation}\label{estimate}
\mathbb{P} \left( \Xi_{t,r}^c \right)  \leq  | \Gamma| \sum_{m = r}  ^{\infty} \frac{{(2dt)}^m}{m!} \leq  C(\Gamma,d) \frac{(2dt)^r}{r!},
\end{equation}
with $C=C(\Gamma,d)=| \Gamma| e^{2d}$.

From Proposition \ref{prop:clusterfinito}, we have
\begin{equation*}
    \bigcup_{r=0}^{\infty} \Xi_{t,r}= [0,1]^{\mathbb{E}^d}\setminus \mathcal{N},
\end{equation*}
with $\P(\mathcal N)=0$. Write $\Theta_{t,r} = \Xi_{t,r}-\Xi_{t,r-1}$, for $r \geq 1$, and $\Theta_{t,0} = \Xi_{t,0}$. Since $\Xi_{t,r-1} \subset \Xi_{t,r}$ for all $r\geq 1$, we obtain
\begin{equation*}
     \bigcup_{r=0}^{\infty} \Theta_{t,r}= [0,1]^{\mathbb{E}^d}\setminus\mathcal{N},
\end{equation*}
where the last union is disjoint. Hence,
\begin{align}\label{prob_A}
    \mathbb{P}_t(A)\coloneq\mathbb{P}(\omega_t^{-1}(A))&=\mathbb{P}\left(  \bigcup_{r \geq 0} \left( \omega_t^{-1}(A) \cap \Theta_{t,r}   \right) \right)\nonumber\\
                   &=\sum_{r \geq 0} \mathbb{P}  \left( \omega_t^{-1}(A)  \cap  \Theta_{t,r}  \right).
\end{align}

A slight modification of Lemma 2 in \cite{SSS} implies that the clock event $\omega_t^{-1}(A) \cap \Xi_{t,r}$ depends only on the random variables $U_e$ for $e \in \mathcal{E}(B_{r+1}(\Gamma))$. Hence, for $U, \tilde{U} \in [0,1]^{\mathcal{E}^d}$ satisfying $U_e = \tilde{U}_e$ for all $e$ in this set, $U \in \Xi_{t,r}$ implies $\tilde{U} \in \Xi_{t,r}$. The same holds for $\Theta_{t,r}$ and $\omega_t^{-1}(A) \cap \Theta_{t,r}$.

Let $m=m(r)=|\mathcal{E}(B_{r+1}(\Gamma))|$ and write $I_m=\{1,2,\ldots,m\}$. Consider the set 
\begin{equation*}
    \mathcal{P}_m=\{ \sigma: I_m \longrightarrow \mathcal{E}(B_{r+1}(\Gamma))\colon \sigma \mbox{ is one-to-one}\}.
\end{equation*}
Each element of $\mathcal{P}_m$ can be seen as an ordering of the edges of $\mathcal{E}(B_{r+1}(\Gamma))$.  For $\sigma \in \mathcal{P}_m$ and $\ell \in I_m^*=I_m \cup \{0\}$, define
\begin{equation*}
    \mathbb{E}_{t,r}^{\sigma,\ell}=\{U \in [0,1]^{\mathcal{E}(B_{r+1}(\Gamma))}\colon   U_{\sigma(1)} < \ldots <U_{\sigma(\ell)}<t<U_{\sigma(\ell+1)}<\ldots<U_{\sigma(m)}\}.
\end{equation*}
A direct calculation shows
\begin{equation*}
 \mathbb{P} \left(\mathbb{E}_{t,r}^{\sigma,\ell}\right) = \frac{t^\ell}{\ell!} \cdot \frac{(1-t)^{m-\ell}}{(m-\ell)!}.   
\end{equation*}
Since $\omega_t^{-1}(A) \cap \Theta_{t,r}$ depends only on the clocks in $\mathcal{E}(B_{r+1}(\Gamma))$, we obtain
\begin{equation*}
\omega_t^{-1}(A) \cap \Theta_{t,r} = \bigcup_{\sigma \in \mathcal{P}_m,\ \ell \in I_m^*} \left( \omega_t^{-1}(A) \cap \Theta_{t,r} \cap   \mathbb{E}_{t,r}^{\sigma,\ell} \right).
\end{equation*}

A key point is that if $\omega_t^{-1}(A)\cap\Theta_{t,r}\cap   \mathbb{E}_{t,r}^{\sigma,\ell}\neq\emptyset$, then $\mathbb{E}_{t,r}^{\sigma,\ell} \subset \omega_t^{-1}(A) \cap \Theta_{t,r}$. This follows from the fact that the occurrence of the local event $A$ depends only on the order in which clocks ring before and after time
$t$, rather than on their actual values. With this in mind, for $r\geq 0$, define
$$S_{t,r}=\{ (\sigma,\ell) \in \mathcal{P}_m \times I_m^{*}:  \mathbb{E}_{t,r}^{\sigma, \ell} \cap \omega_t^{-1}(A) \cap \Theta_{t,r} \neq \emptyset \},$$ and note that
\begin{equation*}
\omega_t^{-1}(A) \cap \Theta_{t,r} = \bigcup_{(\sigma,\ell) \in S_{t,r}}    \mathbb{E}_{t,r}^{\sigma,\ell}.
\end{equation*}
Fix $\ell$ and define $S_{t,r}^\ell := \{\sigma \in \mathcal{P}_m : (\sigma,\ell) \in S_{t,r}\}$. Then:
\begin{align*}
\mathbb{P}_t(A) &= \sum_{r = 0}^\infty \sum_{(\sigma,\ell) \in S_{t,r}} \mathbb{P}(\mathbb{E}_{t,r}^{\sigma,\ell}) \\
&= \sum_{r = 0}^\infty \sum_{\ell = 0}^{m(r)} \sum_{\sigma \in S_{t,r}^\ell} \frac{t^\ell}{\ell!} \cdot \frac{(1 - t)^{m - \ell}}{(m - \ell)!}.
\end{align*}

Define
\begin{equation}\label{efe}
f_r(t) := \sum_{\ell = 0}^{m(r)} |S_{t,r}^\ell| \cdot \frac{t^\ell}{\ell!} \cdot \frac{(1 - t)^{m - \ell}}{(m - \ell)!}.
\end{equation}
From \eqref{estimate} and \eqref{prob_A}, we have
\[
f_r(t) = \mathbb{P}(\omega_t^{-1}(A) \cap \Theta_{t,r}) \leq \mathbb{P}(\Xi_{t,r-1}^c) \leq C \cdot \frac{(2dt)^{r - 1}}{(r - 1)!}.
\]

Assuming that $|S_{t,r}^\ell|$ does not depend on $t$, we obtain (using \eqref{efe})
$$
|f_r'(t)| \leq \left( \frac{m(r)}{t(1 - t)} \right)\cdot f_r(t),
$$
for all $t\in(0,1)$. Hence, for every $\alpha, \beta \in (0,1)$ with $\alpha < \beta$, and $t \in [\alpha, \beta]$, we have $\frac{1}{t} \leq \frac{1}{\alpha}$ and $\frac{1}{1 - t} \leq \frac{1}{1 - \beta}$, yielding to
$$
|f_r'(t)| \leq \left( \frac{m(r)}{\alpha(1 - \beta)} \right) f_r(t)\leq C' m(r)\cdot \frac{(2d)^{r-1}}{(r-1)!},
$$
for all $t\in(0,1)$, $r\geq 2$, and some constant $C'$.

Since $ m(r) $ is polynomial in $ r $, the series $ \sum_{r\geq 0} f^{'}_r(t) $ converges uniformly on the interval $ (0,1) $ by the Weierstrass M-test, and  $\P_t(A)$ is differentiable on $(0,1)$.

To complete the proof, it remains to show that $ S_{t,r}^\ell $ does not depend on $t$. Let $0 < t_1 < t_2 < 1$ and fix $r \geq 0$, $\ell \in \{0, \ldots, m\}$. We claim that $S_{t_1,r}^\ell = S_{t_2,r}^\ell$.

Let $\sigma \in S_{t_1,r}^\ell$ and choose $U \in \omega_{t_1}^{-1}(A) \cap \Theta_{t_1,r} \cap \mathbb{E}_{t_1,r}^{\sigma,\ell}$. Define $V$ by setting $V(e) = U(e) + (t_2 - t_1)$ for all $e$. Then:
\begin{itemize}
\item[($a$)] $V \in \mathbb{E}_{t_2,r}^{\sigma,\ell}$,
\item[($b$)] $\mathcal{C}_e(U) = \mathcal{C}_e(V)$ for all $e$, hence $V \in \Theta_{t_2,r}$,
\item[($c$)] since the configuration remains the same, $V \in \omega_{t_2}^{-1}(A)$.
\end{itemize}

Therefore, $V \in \omega_{t_2}^{-1}(A) \cap \Theta_{t_2,r} \cap \mathbb{E}_{t_2,r}^{\sigma,\ell}$, implying $\sigma \in S_{t_2,r}^\ell$. The reverse inclusion follows similarly by subtracting $(t_2 - t_1)$, completing the proof. 
\end{proof}

\begin{remark} One can show that $t \mapsto \mathbb{P}(\omega_t^{-1}(A))$ is infinitely differentiable on $(0,1)$.  This follows since the $k$-th derivative $f_r^{(k)}(t)$ of $f_r(t)$ is bounded by
$$
f_r^{(k)}(t) \leq \left( \frac{m(r)}{t(1 - t)} \right)^k\cdot f_r(t),
$$
for all $k \in \mathbb{N}$. Hence, for every $\alpha, \beta \in (0,1)$ with $\alpha < \beta$, and $t \in [\alpha, \beta]$, we have $\frac{1}{t} \leq \frac{1}{\alpha}$ and $\frac{1}{1 - t} \leq \frac{1}{1 - \beta}$, yielding to
$$
f_r^{(k)}(t) \leq \left( \frac{m(r)}{\alpha(1 - \beta)} \right)^k f_r(t).
$$
The infinite differentiability follows with a second application of the Weierstrass M-test.
\end{remark}

We conclude this section with the proof of Theorem \ref{cont}.
\bigskip

\textit{Proof of Theorem \ref{cont}:} Recall the definitions of the events $\mathcal{B}_t(0)$ and $\mathcal{B}_{t,n}(0)$ from Section \ref{conn}. It is clear that $$\theta(t)=\lim_{n\rightarrow \infty}\P(\mathcal{B}_{t,n}(0)).$$ By Proposition \ref{diff2}, the function $\mathbb{P}(\mathcal{B}_{t,n})$ is continuous in $t$, so $\theta(t)$ is the decreasing limit of continuous functions. Therefore, $\theta(t)$ is upper semi-continuous. Since $\theta(t)$ is monotonic non-decreasing, it follows that $\theta(t)$ is continuous from the right on $[0,1]$. 

Continuity from the left on the interval 
$(t_c,1)$ follows from the classical argument of van den Berg and Keane \cite{BK}, combined with the uniqueness of the infinite open cluster established in Theorem \ref{thm: trifurcação}.
\qed

\section{Proofs of claims}\label{proof_claims}

In this section, we prove the claims in Section \ref{sec:01infty}. We start with the proof of Claim \ref{claim:DGamma}.

\begin{proof}[Proof of Claim \ref{claim:DGamma}]

For $n\in\N$, let $V = \{ v_1, \ldots, v_m \} \subset \{x \in \partial \Lambda_n \colon x \sim y \text{ for some } y \in \Lambda_{n-1} \} $ be such that, with positive probability, at time \( t \), the following conditions hold:
\begin{itemize}
\item[($a$)] \( v_i \) is connected to infinity outside \( \Lambda_{n-1} \) for every \( i \in [m] \);
\item[($b$)] $v_i$ and $v_j$ are {\bf not} connected for $i\neq j$.
\end{itemize}

Denote by \( \mathcal{X}_{t,n}^{V} \) the event where ($a$) and ($b$) occur at time \(t\). Recall that $$\mathcal{A}_{t,n} = \{ \Lambda_{n} \text{ intersects all $m$ infinite clusters at time $t$}\}.$$ If \(\mathbb{P}( I_{t,m})=1\), then by continuity of the probability, \(\mathbb{P}(\mathcal{A}_{t,n-1})\) goes to 1 as \(n\) goes to infinity. 

On the event \(\mathcal{A}_{t,n-1}\), we can find \(\hat{v}_1,\ldots,\hat{v}_m\) in \(\partial \Lambda_{n-1}\) such that, at time \(t\),
\begin{itemize}
\item[($a$')] $\hat{v}_i$ is connected to infinity outside $\Lambda_{n-1}$ for every $i\in[m]$;
\item[($b$')] $\hat{v}_i$ and $\hat{v}_j$ are not connected for $i\neq j$.
\end{itemize}

Condition ($a$') ensures that, for each $i\in[m]$, there exists $v_i\in \partial\Lambda_n$ such that $\langle \hat{v}_i, v_i \rangle$ is open, $v_i$ is connected to infinity outside $\Lambda_{n-1}$, and $ v_i\in \{x \in \partial \Lambda_n \colon x \sim  y, \text{ for some } y \in \Lambda_{n-1}  \} $. Moreover, ($b$') implies ($b$), yielding 
\begin{equation*}
\mathcal{A}_{t,n-1}  \subset \bigcup_{V\subset \Lambda_n}\mathcal{X}_{t,n}^{V}.
\end{equation*}
Hence, for \(n\) sufficiently large, there exists \( V = \{ v_1, \ldots, v_m \} \subset \{x \in \partial \Lambda_n \colon x \sim y \text{ for some } y \in \Lambda_{n-1} \} \) such that
\begin{equation}\label{eq:existeV}
\mathbb{P}(\mathcal{X}_{t,n}^{V}) > 0.
\end{equation}

Next, for a set \(W = \{w_1, \ldots, w_m\} \subset \partial \Lambda_n\), let \( \mathcal{X}_{t,n}^{V,W} \) denote the event that, at time $ t $,  \(w_i\) is connected to \(v_i\) outside \(\Lambda_{n-1}\), and \(w_i\) is connected to infinity outside \(\Lambda_n\) for every $ i \in [m] $. Since there are finitely many possible sets $W\subset\partial\Lambda_n$ with $m$ elements,  \eqref{eq:existeV} implies that $ \mathcal{X}_{t,n}^{V,W} $ has positive probability for some $W$.

Also, let $\Gamma = \{\gamma_1,\ldots,\gamma_m \}$ be a set of $m$ self-avoiding paths in $\Lambda_{n-1}^c$ such that $\gamma_i$ connects $v_i$ to $w_i$ for every $i\in[m]$, and consider the event
\begin{equation*}
    \mathcal{Z}_{t}^{\Gamma} = \{U\in[0,1]^{\E^d}: \gamma_i \mbox{ are open in the configuration } \omega_t(U) \mbox{ for all } i\in[m]\}.
\end{equation*}
Note that the occurrence of $\mathcal{X}_{t,n}^{V,W}\cap \mathcal{Z}_t^{\Gamma}$ implies itens ($a$)-($d$) stated just before Claim \ref{claim:DGamma}. Let us show that $\mathbb{P} \left( \mathcal{X}_{t,n}^{V,W}\cap \mathcal{Z}_t^{\Gamma} \right) > 0$.

For each $j>n-1$, consider the event
\begin{equation*}
\mathcal{Y}_j=\{U\in[0,1]^{\mathbb{E}^d}\colon v_i\longleftrightarrow w_i \mbox{ in } \Lambda_{n-1}^c\cap\Lambda_j \mbox{ for every } i\in[m] \mbox{ on } \omega_t(U)\},
\end{equation*}
and observe that
\begin{equation*}
\lim_{j\longrightarrow\infty}\mathbb{P}\left(\mathcal{X}_{t,n}^{V,W}\cap\mathcal{Y}j\right)=\mathbb{P}\left(\mathcal{X}_{t,n}^{V,W}\right)>0.
\end{equation*}
Fix $j$ large enough such that
\begin{equation}\label{eq:limite}
\mathbb{P}(\mathcal{X}_{t,n}^{V,W}\cap\mathcal{Y}j)>0.
\end{equation}

Let $\mathcal{G}_j$ denote the collection of sets $\Gamma=\{\gamma_1,\ldots,\gamma_m\}$ of $m$ self-avoiding paths in $\Lambda_{n-1}^c\cap\Lambda_j$ such that $\gamma_i$ connects $v_i$ to $w_i$ for all $i\in[m]$. Note that $\mathcal{G}_j$ is finite and
\begin{equation}\label{jgrande}
\mathcal{X}_{t,n}^{V,W}\cap\mathcal{Y}j=\bigcup_{\Gamma\subset\mathcal{G}_j}\left( \mathcal{X}_{t,n}^{V,W}\cap \mathcal{Z}_t^{\Gamma}\right).
\end{equation}
Combining \eqref{eq:limite} with \eqref{jgrande}, we obtain
\begin{equation}\label{eq:quasela}
\mathbb{P} \left( \mathcal{X}_{t,n}^{V,W}\cap \mathcal{Z}_t^{\Gamma} \right) > 0.
\end{equation}

Finally, to account for items ($e$) and ($f$) preceding Claim \ref{claim:DGamma}, let $A\subset\partial^e\Lambda_n$ and define
\begin{equation*}
\mathcal{H}_{t,n}^A =\left\{U\in[0,1]^{\E^d}\colon \omega_t(U,e)=1 \mbox{ if } e\in A \mbox{ and } \omega_t(U,e)=0 \mbox{ if } e\in\partial^e\Lambda_n\cap A^c\right\}.
\end{equation*}
Decompose \(\mathcal{X}_{t,n}^{V,W} \cap \mathcal{Z}_t^{\Gamma}\) as a finite union (over sets $A\subset \partial^e\Lambda_n$) of the events $\mathcal{H}_{t,n}^A$. The claim follows since all the requirements of Claim \ref{claim:DGamma} are satisfied in \( \mathcal{X}_{t,n}^{V,W} \cap \mathcal{Z}_t^{\Gamma} \cap \mathcal{H}_{t,n}^A \) and  by \eqref{eq:quasela}.
\end{proof}

\begin{proof}[Proof of Claim \ref{lem:modif1}] Let us start with the proof of part ($a$). Let $e=\langle x,y \rangle\in\partial^e\Lambda_n$, with $x\in\partial\Lambda_n$. By the definition of $\mathcal{E}_{t,n}^{\Gamma,A}$,  if $\omega_t(U,e) =0$, then $R(e)>t$; whereas if $\omega_t(U,e) =1$, then $R(e)= U(e)$. Furthermore, note that any edge $f$ that is closed in $\omega_t(U)$ will also be closed in $\omega_t(R)$, because in this case $R(f)>t$. Therefore, if $s<R_e$, it follows that
\begin{equation*} 
    \deg_s(x,R)\leq \deg_s(x,U) < k.
\end{equation*}
Hence, $R$ simplifies the boundary of $\Lambda_n$ in $U$ at time  $t$. In particular, \eqref{eq: fora ta igual} holds by Lemma \ref{teo:PMF}.

We now prove ($b$). By the definition of $\mathcal{E}_{t,n}^{\Gamma,A}$, the only edges in $\mathcal{E}(\Lambda_n)$ that are open in $\omega_t(R)$ are those belonging to the paths $\gamma_1$ and $\gamma_2$. Conversely, if the paths $\gamma_1$ and $\gamma_2$ include edges outside $\mathcal{E}(\Lambda_n)$, those edges remain open in $\omega_t(R)$ due to \eqref{eq: fora ta igual}.
\end{proof}

\begin{proof}[Proof of Claim \ref{lem: final}] We may assume $k\geq 3$, since otherwise Proposition 1 in \cite{LSSST} implies $\mathcal{N}_t=0$ almost surely. We first prove ($a$). Let $\gamma$ be as defined in \eqref{eq: defgamma}. We will show that all edges of $\gamma$ are open in $\omega_t(R)$.  

First, take an edge $e = \langle u_i,v_i \rangle$ from $\gamma$, $i \in \{1,2\}$. Observe that the number of open edges incident to $u_i$ is exactly $2$, specifically the two edges that belong to $\gamma$. On the other hand, the number of open edges incident to $v_i$ is at most 3: the edge $e$ itself, the single edge of $\gamma_i$ incident to $v_i$, and possibly the single edge connecting $v_i$ to $\Lambda_{n+1}$ (recall that $v_i$ lies on the surface of $\Lambda_n$).

Now, suppose $e$ is an edge of the self-avoiding path $\alpha$. In this case, each vertex along $\alpha$ is incident to at most two open edges. Consequently, in $\omega_t(R)$, the vertices $v_1$ and $v_2$ are connected.

For the proof of ($b$), it is clear that \eqref{eq: coincidem...} holds for $e \in \mathcal{E}(\partial\Lambda_n)$. Therefore, it suffices to verify it for $e \in \mathcal{E}(\Lambda_n)^c$. We will check that $R$ simplifies the boundary of $U$.  

Suppose $e=\langle x, y \rangle \in \partial^e \Lambda_n$ with $x \in \Lambda_n$. Clearly, $R(e)>t$ whenever $\omega_t(U,e)=0$. Conversely, if $\omega_t(U,e)=1$, then $R(e)=U(e)$. Also, if $x \not\in \{v_1,v_2\}$, then the number of open edges incident on $x$ in  $\omega_s(R,e)$ is the same as in $\omega_s(U,e)$ for all $s<t$. Hence,  for all $s < R(e)$, it holds that
\begin{equation*}
    \deg_s(R,e) = \deg_s(U,e)<k.
\end{equation*}
Finally, if $x \in \{v_1,v_2\}$ and $s<R(e)$, then the number of open edges incident to $x$ at time $s$ is at most 2: one connecting the inside and one belonging to $\gamma_i$, $i=1,2$. 
\end{proof}

\section*{Acknowledgements} Weberson Arcanjo was partially supported by Conselho Nacional de Desenvolvimento Científico e Tecnológico (CNPq), grant 402952/2023-5. Alan Pereira was partially supported by Fundação de Amparo à Pesquisa do Estado de Alagoas (FAPEAL), process E:60030.0000000161/2022, Project APQ2022021000107, and CNPq, grant 402952/2023-5. Diogo dos Santos was partially supported by CNPq, grants 409198/2021-8 and 402952/2023-5. Roger Silva was partially supported by Fundação de Amparo à Pesquisa do Estado de Minas Gerais (FAPEMIG), grant APQ-06547-24.  Marco Ticse was partially supported by FAPEMIG.

\section*{Conflict of interest statement}

The authors have no conflicts of interest to declare.
All co-authors have seen and agree with the contents of the manuscript and
there is no financial interest to report. We certify that the submission is
original work and is not under review at any other publication.

\section*{Data availability statement}

Data sharing not applicable to this article as no datasets were generated or analysed during the current study.

\end{document}